\providecommand{\doi}[1]{\href{https://doi.org/#1}{DOI:#1}}
\renewcommand{\doi}[1]{%
 \href{https://doi.org/#1}{\nolinkurl{DOI:#1}}%
}
\numberwithin{equation}{section}
\theoremstyle{plain}
\newtheorem{theorem}{Theorem}[section]
\newtheorem{proposition}[theorem]{Proposition}
\newtheorem{corollary}[theorem]{Corollary}
\newtheorem{assumption}{Assumption}
\newtheorem{remark}{Remark}
\newcommand{\NN}{{\mathbb{N}}}
\newcommand{\PP}{{\mathsf{P}}}
\newcommand{\EE}{{\mathsf{E}}}
\newcommand{\Var}{\mathsf{Var}}
\newcommand{\Bias}{\mathsf{Bias}}
\newcommand{\MSE}{\mathsf{MSE}}
\newcommand{\IMSE}{\mathsf{IMSE}}
\newcommand{\II}{\mathds{1}}
\newcommand{\bb}[1]{\boldsymbol{#1}}
\title{Asymptotic properties of the multivariate Sz\'{a}sz-Mirakyan estimator for cumulative distribution functions on the nonnegative orthant}
\author{Guanjie Lyu\footnote{Corresponding author at: Department of Community Health and Epidemiology, Faculty of Medicine, Dalhousie University, Canada, E-mail: glyu@dal.ca} ,\ Fr\'ed\'eric Ouimet\footnote{D\'epartement de math\'ematiques et d'informatique, Universit\'e du Qu\'ebec \`a Trois-Rivi\`eres, Trois-Rivi\`eres, Canada, E-mail: frederic.ouimet2@uqtr.ca} \ and\ Cindy Feng\footnote{ Department of Community Health and Epidemiology, Faculty of Medicine, Dalhousie University, Canada, E-mail: cindy.feng@dal.ca} }
\begin{document}
\date{\vspace{-5ex}}

\maketitle

\begin{abstract}
The asymptotic properties of multivariate Sz\'{a}sz–Mirakyan estimators for cumulative distribution functions (cdf) supported on the nonnegative orthant are investigated. Explicit bias and variance expansions are derived on compact subsets of the interior, yielding sharp mean squared error characterizations and optimal smoothing rates. The analysis shows that the proposed Poisson smoothing yields a non-negligible variance reduction relative to the empirical cdf, leading to asymptotic efficiency gains that can be quantified through local and global deficiency measures. The behavior of the estimator near the boundary of its support is examined separately. Under a boundary-layer scaling that preserves nondegenerate Poisson smoothing as the evaluation point approaches the boundary of $[0,\infty)^d$, bias and variance expansions are obtained that differ fundamentally from those in the interior region. In particular, the variance reduction mechanism disappears at leading order, implying that no asymptotically optimal smoothing parameter exists in the boundary regime. Central limit theorems and almost sure uniform consistency are also established. Together, these results provide a unified asymptotic theory for multivariate Sz\'{a}sz--Mirakyan cdf estimation and clarify the distinct roles of smoothing in the interior and boundary regions.
\end{abstract}

\hspace{0mm} \\
\noindent \emph{Keywords:} Asymptotic deficiency, bias-variance trade-off, boundary asymptotics, cdf estimation, Lindeberg condition, nonparametric estimation, uniform consistency.

\section{Introduction}

Modeling and estimating multivariate distributions is a common task in many statistical settings, including structural reliability analysis~\citep{Li2013modeling}, survival studies~\citep{Dabrowska1988kaplan,Dabrowska1989kaplan}, and multivariate pollutant measurements~\citep{Schmidt2003pollutant}, yet it remains technically delicate. On the nonnegative orthant $[0,\infty)^d$, the empirical cumulative distribution function (cdf) is unbiased and uniformly consistent and therefore serves as a natural baseline estimator. However, its step-function form can exhibit unstable finite-sample behavior, motivating the development of smooth cdf estimators that preserve the interpretability and shape constraints of a cdf while offering improved practical performance. At the same time, classical kernel-smoothing techniques face intrinsic difficulties on $[0,\infty)^d$: the challenge comes from the coordinate hyperplanes forming the boundary, i.e., the set where at least one component equals zero, where standard fixed symmetric kernel estimators can suffer from boundary bias (edge effects); see~\cite{MR4899357} for a discussion.

To address boundary effects in kernel-based cdf estimation on bounded or one-sided supports, one can incorporate explicit boundary modifications, such as boundary kernels or asymmetric kernels \citep{Tenreiro2018new,Zhang2020boundary,LafayeDeMicheaux2021study,MR4349488,Mansouri2024CDF}. However, such boundary-specific constructions can add extra tuning and analysis in boundary regions, motivating the consideration of simpler support-adapted alternatives. One such approach is to employ nonnegative-support approximation operators that are intrinsically adapted to $[0,\infty)^d$. In this spirit, Sz\'{a}sz--Mirakyan--type constructions smooth the cdf by averaging with Poisson weights over a data-dependent neighborhood that remains entirely within the support, thereby avoiding boundary leakage and respecting the geometry of the nonnegative orthant \citep{Mirakjan1941,Szasz1950}. This operator-based approach is closely related to Bernstein polynomials in approximation theory \citep{Lorentz1986} and provides a tractable framework for deriving explicit bias--variance expansions and for quantifying efficiency gains relative to the empirical cdf in multivariate settings.

The Sz\'{a}sz--Mirakyan operator has motivated sustained interest in statistical applications. Early work by \citet{Gawronski1980,Gawronski1981} established asymptotic bias and variance properties for density and cdf estimation on $[0,\infty)$, while subsequent analysis by \citet{BouezmarniScaillet2005} demonstrated weak, uniform, and $L_1$ consistency for density estimators and clarified how Poisson-based smoothing mitigates boundary bias at zero. More recently,~\cite{Ouimet2021lecam} established a sharp Le~Cam asymptotic equivalence between Poisson and Gaussian experiments and leveraged this connection to derive variance expansions of Sz\'{a}sz--Mirakyan type estimators. A further development is due to \citet{HanebeckKlar2021}, who introduced a smooth estimator of univariate cdfs on $[0,\infty)$ based on Sz\'asz--Mirakyan operators and showed that Poisson smoothing eliminates boundary bias and yields favorable mean squared error performance across a broad range of scenarios. The present work advances this line of research by extending Sz\'asz--Mirakyan smoothing to the multivariate setting and develops a unified asymptotic theory for cdf estimation on $[0,\infty)^d$. Related extensions of Poisson-based smoothing include the smooth Stute-type estimator of \citet{MR4784228}, which adapts the methodology to censored bivariate data.

In what follows, we introduce the multivariate Sz\'{a}sz--Mirakyan operator and the associated smoothing of cdfs on $[0,\infty)^d$. For $\bb{m} = (m_1,\ldots,m_d)\in\NN^d$, define the product--Poisson weights
\[
P_{\bb{k},\bb{m}}(\bb{x}) = \prod_{j=1}^d \exp(-m_j x_j)\frac{(m_j x_j)^{k_j}}{k_j!}, \qquad \bb{k}\in\NN_0^d,\ \bb{x}\in[0,\infty)^d,
\]
and the Sz\'{a}sz–Mirakyan smoothing of $F$ by
\[
F_{\bb{m}}(\bb{x})\ = \ \sum_{\bb{k}\in\NN_0^d} F\Big(\frac{\bb{k}}{\bb{m}}\Big) \, P_{\bb{k},\bb{m}}(\bb{x}), \qquad \frac{\bb{k}}{\bb{m}} = \Big(\frac{k_1}{m_1},\ldots,\frac{k_d}{m_d}\Big).
\]
Let $\bb{X}_1,\ldots,\bb{X}_n$ be a random sample of independent and identically distributed (iid) observations from a $d$-variate cdf $F$ supported on $[0,\infty)^d$. For $\bb{x} = (x_1,\ldots,x_d)\in[0,\infty)^d$ and a vector of smoothing parameters $\bb{m} = (m_1,\ldots,m_d)\in\mathbb N^d$, the multivariate version of the Sz\'{a}sz--Mirakyan cdf estimator is defined by
\begin{equation}\label{eq:distribution_est}
F_{\bb{m},n}(\bb{x}) = \sum_{\bb{k}\in\mathbb N_0^d} F_n\Bigl(\frac{\bb{k}}{\bb{m}}\Bigr) \, P_{\bb{k},\bb{m}}(\bb{x}).
\end{equation}
Here, $F_n$ denotes the empirical cdf on $[0,\infty)^d$,
\begin{equation}\label{eq:ecdf}
F_n(\bb{x}) = \frac{1}{n}\sum_{i=1}^n \II\!\left\{\bb{X}_i \leqslant \bb{x}\right\} = \frac{1}{n}\sum_{i=1}^n \prod_{j=1}^d \II\!\left\{X_{ij} \leqslant x_j\right\},
\end{equation}
where $\bb{X}_i = (X_{i1},\ldots,X_{id})^{\top}$ and the inequality is understood coordinatewise. Equivalently, let $W_{ij} = \lceil m_j X_{ij}\rceil$ ($i = 1,\ldots,n; j = 1,\ldots,d$), where $\lceil\cdot\rceil$ denotes the ceiling function, i.e., rounding a real number up to the smallest integer greater than or equal to it. Then estimator~\eqref{eq:distribution_est} admits the sample average representation
\begin{equation}\label{eq:distribution_est_2}
F_{\bb{m},n}(\bb{x}) = \frac{1}{n}\sum_{i=1}^n \prod_{j=1}^d \PP\left(\mathrm{Poi}(m_j x_j) \geq W_{ij}\right),
\end{equation}
where $\mathrm{Poi}(\lambda)$ denotes a Poisson random variable with mean $\lambda$.

Throughout the paper, let $\|\cdot\|_{\infty}$ denote the uniform norm and $\|\cdot\|_1$ the $\ell_1$-norm. The notation $u = O(v)$ means that $\limsup |u/v| \leq C < \infty$ as $m\to \infty$ or $n\to \infty$, depending on the context. The positive constant $C$ may depend on the target cdf $F$ and the dimension $d$, but no other variables unless explicitly written as a subscript. In the proof of \texorpdfstring{\hyperref[thm:deficiency-sm]{Theorem~\ref{thm:deficiency-sm}}}{Theorem~\ref{thm:deficiency-sm}}, a common occurrence is a local dependence of the asymptotics on a given point $\bb{x}\in (0,\infty)^d$, in which case one writes $u = O_{\bb{x}}(v)$. Similarly, the notation $u = o(v)$ means that $\lim |u/v| = 0$ as $m\to \infty$ or $n\to \infty$. Subscripts indicate which parameters the convergence rate can depend on. The notation $u\asymp v$ means that $u = O(v)$ and $v = O(u)$ hold simultaneously.

The remainder of the paper is organized as follows.~\hyperref[sec:interior]{Section~\ref{sec:interior}} studies the interior asymptotic properties of the Sz\'{a}sz--Mirakyan estimator, including bias--variance expansions, mean squared error analysis, asymptotic deficiency, and limit theorems. \hyperref[sec:boundary]{Section~\ref{sec:boundary}} investigates the estimator’s behavior near the boundary of $[0,\infty)^d$, deriving bias and variance expansions under a boundary-layer scaling and highlighting the fundamental differences between interior and boundary regimes. \hyperref[sec:simulations]{Section~\ref{sec:simulations}} reports a Monte Carlo simulation study comparing the empirical cdf $F_n$ and the Sz\'{a}sz--Mirakyan estimator $F_{\bb{m},n}$ on compact interior regions and illustrating data-driven smoothing selection via least-squares cross-validation. Finally, \hyperref[sec:conclusion]{Section~\ref{sec:conclusion}} concludes with a discussion of the main findings and directions for future research.

\section{Interior properties}\label{sec:interior}

The multivariate Szász--Mirakyan estimator proposed here admits anisotropic smoothing through coordinate-specific smoothing parameters \(m=(m_1,\ldots,m_d)\). Allowing the components of \(m\) to differ provides greater flexibility by accommodating direction-dependent Poisson smoothing around the evaluation point. Figure~\ref{fig:SM_smoothing} displays iso-level sets of the bivariate product-Poisson weight $P_{\bb{k}, \bb{m}}(\bb{x}) = \prod_{j=1}^{d} \mathrm{e}^{-m_j x_j} (m_j x_j)^{k_j}/k_j!$ at a fixed interior evaluation point $\bb{x} = (x_1, x_2)$, plotted as a function of the rescaled lattice coordinate $\bb{k}/\bb{m} = (k_1/m_1, k_2/m_2)$. In this rescaling the smoothing weight is centred at $\bb{x}$, with component-wise standard deviation of order $\sqrt{x_j / m_j}$, so a smaller $m_j$ corresponds to greater dispersion of the Poisson weights along coordinate $j$. The two panels use $\sqrt{m_1 m_2} = 100$ in both cases, isolating the ratio $m_1/m_2$ as the only quantity that differs between them. In the anisotropic case $(m_1, m_2) = (50, 200)$ the level sets are ellipses elongated along the coordinate with the smaller smoothing parameter; in the isotropic case $m_1 = m_2 = 100$ they degenerate to concentric circles centred at $\bb{x}$, indicating uniform localisation across coordinates. This geometric contrast makes explicit the coordinate-wise structure of the bias and variance expansions developed below: contributions from each coordinate enter additively and are weighted by $1/m_j$, so anisotropic choices of $\bb{m}$ trade localisation in one direction for additional smoothing in another.

\begin{figure}[ht]
\centering
\includegraphics[width=\linewidth]{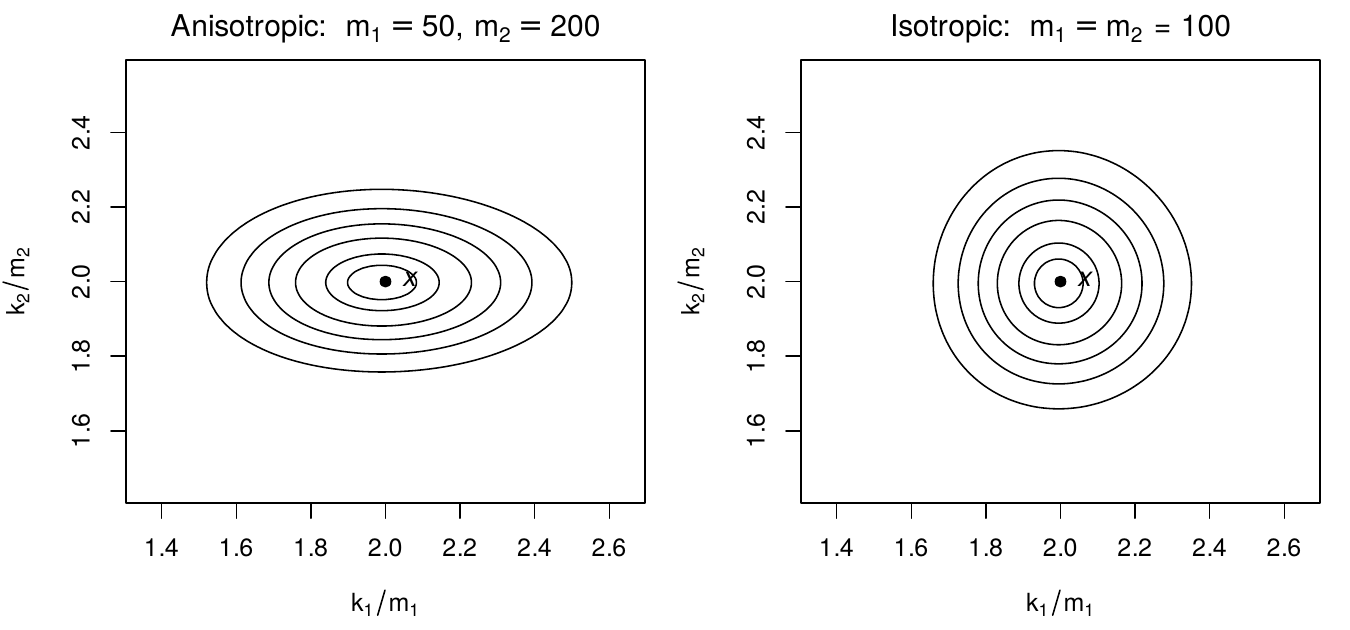}
\caption{Iso-level sets of the bivariate product--Poisson smoothing weights
	\(P_{k,m}(x)\), plotted as a function of the rescaled lattice point \(k/m\)
	at a fixed interior evaluation point \(x=(2,2)\).}
\label{fig:SM_smoothing}
\end{figure}

To study the asymptotic behavior of the Sz\'{a}sz--Mirakyan cdf estimator in the interior of its support, we restrict attention to compact subsets of $(0,\infty)^d$, away from boundary effects. The following regularity condition guarantees that local Taylor expansions of the cdf are valid uniformly in a neighbourhood of the evaluation point, which is essential for deriving sharp bias and variance expansions.

\begin{assumption}\label{ass:C2-vector}
The cdf $F$ is twice continuously differentiable on $(0,\infty)^d$. Moreover, for every compact set $S\subseteq (0,\infty)^d$ there exist constants $\delta_S > 0$ and $M_S < \infty$ such that
\[
\sup_{\bb{y}\in \mathcal N_{\delta_S}(S)}\max_{1 \leq i \leq d}\big|\partial_{x_i} F(\bb{y})\big|\ \leq \ M_S, \qquad \sup_{\bb{y}\in \mathcal N_{\delta_S}(S)}\max_{1 \leq i,j \leq d}\big|\partial_{x_i x_j}^2 F(\bb{y})\big|\ \leq \ M_S,
\]
where
\[
\mathcal N_{\delta_S}(S) := \Big\{\bb{y}\in(0,\infty)^d:\ \exists \, \bb{z}\in S\ \text{with}\ \|\bb{y}-\bb{z}\|_{1} \leq \delta_S\Big\}.
\]
\end{assumption}
Under Assumption~\ref{ass:C2-vector}, the Poisson smoothing induced by the Sz\'{a}sz--Mirakyan operator admits a second--order expansion with uniformly controlled remainder on compact subsets of $(0,\infty)^d$. In particular, the centering property of the Poisson kernel eliminates first--order bias terms, while bounded second derivatives of $F$ determine the leading contribution to the smoothing bias. The results below derive explicit bias and variance expansions for $F_{m,n}$, which provide the foundation for the mean squared error analysis, optimal smoothing rates, and the deficiency comparisons developed in subsequent subsections.

\begin{remark}[Remark on the dependence of remainder terms on compact sets]
Throughout Section~\ref{sec:interior}, whenever an expansion is stated uniformly for $\bb{x}\in S$, with $S\subseteq (0,\infty)^d$ compact, the corresponding $O$- and $o$-terms may depend on this fixed set $S$. We keep this dependence implicit, rather than writing $O_S(\cdot)$ and $o_S(\cdot)$ throughout, in order to avoid unnecessary notational clutter. More concretely, for every such compact set $S$ one can choose numbers $b_i\in(0,1)$, $i = 1,\ldots,d$, such that
\[
S\subseteq \prod_{i=1}^d [b_i,b_i^{-1}], \qquad \bb{b} = (b_1,\ldots,b_d).
\]
The uniform constants and convergence rates used below may be controlled in terms of these coordinatewise lower and upper bounds, together with the relevant local bounds and moduli of continuity of the derivatives of $F$ on a slightly enlarged rectangle, for example $\prod_{i=1}^d [b_i/2,2b_i^{-1}]$. Thus, in the interior theory developed here, the relevant information about $S$ is its coordinatewise separation from the boundary of $[0,\infty)^d$ and its coordinatewise upper size, as encoded by the $b_i$'s. Finer geometric properties of the set, such as convexity or the precise shape of its boundary, do not enter the remainder bounds in an essential way.
\end{remark}

\subsection{Bias and variance expansions}

The following proposition provides a uniform second--order expansion of the smoothed cdf on interior compact sets, which isolates the leading contribution of the operator bias.

\begin{proposition}\label{prop:SM-bias-vector}
Suppose Assumption~\ref{ass:C2-vector} holds. Then, for every compact set $S\subseteq (0,\infty)^d$, as $m_{\min} = \min_{1 \leq j \leq d} m_j\to\infty$ (with no asymptotic regime imposed on $n$),
\[
F_{\bb{m}}(\bb{x}) = F(\bb{x}) + \frac{1}{2}\sum_{j=1}^d \frac{x_j}{m_j} \, \partial_{x_jx_j}^2F(\bb{x}) + o(m_{\min}^{-1}),
\]
uniformly for $\bb{x}\in S$.
\end{proposition}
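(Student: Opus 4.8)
The plan is to recast $F_{\bm m}(\bm x)$ as an expectation under a product--Poisson law and then carry out a \emph{localized} second--order Taylor expansion. Let $N_1,\dots,N_d$ be independent with $N_j\sim\mathrm{Poi}(m_jx_j)$, set $Y_j:=N_j/m_j$ and $\bm Y:=(Y_1,\dots,Y_d)$; the product form of $P_{\bm k,\bm m}$ gives $F_{\bm m}(\bm x)=\EE[F(\bm Y)]$, with $\EE[Y_j]=x_j$, $\Var(Y_j)=x_j/m_j$, the $Y_j$ mutually independent, and (from the fourth central moment of a Poisson law) $\EE[(Y_j-x_j)^4]=x_jm_j^{-3}+3x_j^2m_j^{-2}$; consequently $\EE\|\bm Y-\bm x\|_1^2=O_S(m_{\min}^{-1})$ and $\EE\|\bm Y-\bm x\|_1^4=O_S(m_{\min}^{-2})$, uniformly on $S$. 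Since $S$ is compact in $(0,\infty)^d$, shrink $\delta_S$ if necessary so that $\overline{\mathcal N_{\delta_S}(S)}$ is a compact subset of $(0,\infty)^d$; then Assumption~\ref{ass:C2-vector} supplies the bound $M_S$ there, and the continuity of $\nabla^2F$ on that compact set yields a modulus $\omega_S(t)\downarrow0$ as $t\downarrow0$ with $|\partial_{x_ix_j}^2F(\bm y)-\partial_{x_ix_j}^2F(\bm z)|\le\omega_S(\|\bm y-\bm z\|_1)$ on $\overline{\mathcal N_{\delta_S}(S)}$.

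Next I would localize via $A:=\{\|\bm Y-\bm x\|_1\le\delta_S\}$, on which $\bm x\in S$ forces $\bm Y\in\mathcal N_{\delta_S}(S)$. A Bernstein/Chernoff bound for Poisson deviations gives $\PP(A^c)\le C e^{-c_S m_{\min}}$, uniformly over $\bm x\in S$ because $x_j$ is bounded above and below on the compact set $S$. Using $0\le F\le1$, the $A^c$--contribution to $\EE[F(\bm Y)-F(\bm x)]$ is $O(\PP(A^c))$, which is $o(m_{\min}^{-1})$. On $A$, Taylor's theorem with second--order Lagrange remainder at some $\bm\xi$ on the segment joining $\bm x$ and $\bm Y$ (which lies in $\mathcal N_{\delta_S}(S)$) gives
\[
F(\bm Y)-F(\bm x)=\nabla F(\bm x)^\top(\bm Y-\bm x)+\tfrac12(\bm Y-\bm x)^\top\nabla^2F(\bm x)(\bm Y-\bm x)+\tfrac12(\bm Y-\bm x)^\top\big[\nabla^2F(\bm\xi)-\nabla^2F(\bm x)\big](\bm Y-\bm x).
\]

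I would then treat the three terms in turn. \emph{Linear term:} $\EE[\nabla F(\bm x)^\top(\bm Y-\bm x)\II_A]=-\nabla F(\bm x)^\top\EE[(\bm Y-\bm x)\II_{A^c}]$ since $\EE[\bm Y-\bm x]=\bm 0$; each coordinate of $\EE[(\bm Y-\bm x)\II_{A^c}]$ is at most $(x_j/m_j)^{1/2}\PP(A^c)^{1/2}$ by Cauchy--Schwarz, so with $|\nabla F(\bm x)|\le M_S$ this is super-polynomially small, hence $o(m_{\min}^{-1})$. \emph{Quadratic term:} by independence and zero mean the off--diagonal expectations vanish, so $\EE[(\bm Y-\bm x)^\top\nabla^2F(\bm x)(\bm Y-\bm x)]=\sum_{j=1}^d\partial_{x_jx_j}^2F(\bm x)\,\Var(Y_j)=\sum_{j=1}^d\partial_{x_jx_j}^2F(\bm x)\,x_j/m_j$, while dropping the factor $\II_A$ costs at most $M_S(\EE\|\bm Y-\bm x\|_1^4)^{1/2}\PP(A^c)^{1/2}=O_S(m_{\min}^{-1})e^{-c_S m_{\min}/2}$; multiplying by $\tfrac12$ yields exactly the stated leading term. \emph{Remainder term:} its modulus on $A$ is at most $\tfrac12\omega_S(\|\bm Y-\bm x\|_1)\|\bm Y-\bm x\|_1^2$; for $\epsilon\in(0,\delta_S]$, splitting at $\|\bm Y-\bm x\|_1=\epsilon$ bounds the expectation by $\tfrac12\omega_S(\epsilon)\EE\|\bm Y-\bm x\|_1^2=\tfrac12\omega_S(\epsilon)\,O_S(m_{\min}^{-1})$ on the inner part, plus, on the outer part (where $\omega_S$ is bounded by a constant $\Omega_S$), at most $\Omega_S(\EE\|\bm Y-\bm x\|_1^4)^{1/2}\PP(\|\bm Y-\bm x\|_1>\epsilon)^{1/2}=O_S(m_{\min}^{-1})e^{-c_S(\epsilon)m_{\min}}$. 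Hence $\limsup_{m_{\min}\to\infty}m_{\min}\,|\EE[\text{remainder}\cdot\II_A]|\le\tfrac12 C_S\,\omega_S(\epsilon)$ for every $\epsilon>0$, and letting $\epsilon\downarrow0$ shows this term is $o(m_{\min}^{-1})$. Summing the four contributions gives the expansion, and uniformity on $S$ holds because $\delta_S$, $M_S$, $\omega_S$, $\Omega_S$, the moment bounds, and the Chernoff constants all depend only on $S$, $d$, and $F$.

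The main obstacle is that $F$ is assumed only twice continuously differentiable on the \emph{open} orthant, with no control of $\nabla^2F$ near $\partial[0,\infty)^d$, whereas the Poisson vector $\bm Y$ has unbounded support and can land arbitrarily close to (or on) the boundary, so a global Taylor estimate is unavailable. The remedy is the two--scale localization above: a $\delta_S$--neighborhood on which Assumption~\ref{ass:C2-vector} delivers uniform derivative bounds and uniform continuity of $\nabla^2F$, Poisson concentration to render the complement super-polynomially negligible, and the inner $\epsilon$--splitting that upgrades the remainder estimate from $O(m_{\min}^{-1})$ to $o(m_{\min}^{-1})$.
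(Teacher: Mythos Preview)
Your proposal is correct and follows essentially the same approach as the paper's proof: both represent $F_{\bm m}(\bm x)$ as a product--Poisson expectation, perform a near/far split, apply a second--order Taylor expansion on the near event with a modulus-of-continuity argument for the Hessian remainder, and control the far event via Poisson Chernoff tails together with Cauchy--Schwarz/H\"older to absorb the complementary linear and quadratic pieces. The only cosmetic difference is that the paper uses a single small radius $\eta\in(0,\delta_S]$ and lets $\eta\downarrow 0$ at the end, whereas you fix the outer radius at $\delta_S$ and introduce a separate inner split at $\epsilon$; the two are equivalent.
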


\begin{proof}
The proof is given in \ref{app}.
\end{proof}

Proposition~\ref{prop:SM-bias-vector} identifies the deterministic smoothing bias of the Sz\'{a}sz--Mirakyan operator on interior compact sets. Combining this bias with the sampling variability of the empirical Sz\'{a}sz--Mirakyan estimator yields the joint bias--variance expansion for $F_{\boldsymbol m,n}$ stated next.

\begin{theorem}\label{thm:szasz-bias-var}
Suppose Assumption~\ref{ass:C2-vector} holds. Then, for every compact $S\subseteq (0,\infty)^d$,
\[
\Bias\left(F^{}_{\bb{m}, n}(\bb{x})\right) = \EE\big[F^{}_{\bb{m}, n}(\bb{x})\big]-F(\bb{x}) = \frac{1}{2}\sum_{j=1}^d \frac{x_j}{m_j} \, \partial_{x_jx_j}^2F(\bb{x}) + o\big(m_{\min}^{-1}\big), \qquad \forall \bb{x}\in S,
\]
and
\[
\Var\big(F^{}_{\bb{m}, n}(\bb{x})\big) = {n^{-1}} \, \sigma^2(\bb{x}) -{n^{-1}} \, V_{}(\bb{x};\bb{m}) + O\big(n^{-1}m_{\min}^{-1}\big), \qquad \forall \bb{x}\in S,
\]
as $m_{\min} = \min_{1 \leq j \leq d} m_j\to\infty$ and $n\to\infty$, where
\[
\sigma^2(\bb{x}) = F(\bb{x})\{1-F(\bb{x})\}, \qquad V_{}(\bb{x};\bb{m}) = \sum_{j=1}^d m_j^{-1/2} \, \partial_{x_j}F(\bb{x}) \, \sqrt{\frac{x_j}{\pi}}.
\]
\end{theorem}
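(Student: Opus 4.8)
The plan is to treat the bias and the variance separately, the latter being the substantive part. For the bias, note that $F_{\bm m,n}(\bm x)=\sum_{\bm k\in\NN_0^d}F_n(\bm k/\bm m)\,P_{\bm k,\bm m}(\bm x)$ depends linearly on the empirical c.d.f.; since $\EE[F_n(\bm y)]=F(\bm y)$ for every $\bm y$ and $\sum_{\bm k\in\NN_0^d}P_{\bm k,\bm m}(\bm x)=1$, linearity of expectation gives $\EE[F_{\bm m,n}(\bm x)]=\sum_{\bm k}F(\bm k/\bm m)\,P_{\bm k,\bm m}(\bm x)=F_{\bm m}(\bm x)$, so the stated bias expansion is exactly Proposition~\ref{prop:SM-bias-vector}.

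For the variance I would first reduce to a single expectation. Using that $\bm X_1,\dots,\bm X_n$ are i.i.d.\ and splitting the double sum over observation indices into its diagonal and off-diagonal parts, one has $\EE[F_n(\bm a)F_n(\bm b)]=\tfrac{n-1}{n}F(\bm a)F(\bm b)+\tfrac1nF(\bm a\wedge\bm b)$, where $\bm a\wedge\bm b$ is the coordinatewise minimum and $F(\bm a\wedge\bm b)=\PP(\bm X_1\le\bm a,\ \bm X_1\le\bm b)$. Substituting this into $\Var(F_{\bm m,n}(\bm x))=\sum_{\bm k,\bm l}\EE[F_n(\bm k/\bm m)F_n(\bm l/\bm m)]P_{\bm k,\bm m}(\bm x)P_{\bm l,\bm m}(\bm x)-F_{\bm m}(\bm x)^2$, the off-diagonal contribution collapses to $\tfrac{n-1}{n}F_{\bm m}(\bm x)^2$ and one obtains
\[
\Var\bigl(F_{\bm m,n}(\bm x)\bigr)=\frac1n\Bigl(T_{\bm m}(\bm x)-F_{\bm m}(\bm x)^2\Bigr),
\qquad
T_{\bm m}(\bm x):=\EE\Bigl[F\bigl((\bm K\wedge\bm L)/\bm m\bigr)\Bigr],
\]
where, recognizing $\prod_jP_{k_j,\cdot}(x_j)P_{l_j,\cdot}(x_j)$ as a joint Poisson pmf, $\bm K=(K_1,\dots,K_d)$ and $\bm L=(L_1,\dots,L_d)$ are independent with all coordinates mutually independent and $K_j,L_j\sim\mathrm{Poi}(m_jx_j)$. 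By Proposition~\ref{prop:SM-bias-vector} and $0\le F\le1$, $F_{\bm m}(\bm x)^2=F(\bm x)^2+O(m_{\min}^{-1})$ uniformly on $S$, so everything reduces to expanding $T_{\bm m}(\bm x)$ to order $m_{\min}^{-1/2}$.

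Set $D_j:=(K_j\wedge L_j)/m_j-x_j$ and let $A:=\bigcap_{j=1}^d\{|D_j|\le\delta_S/d\}$, with $\delta_S,M_S$ from Assumption~\ref{ass:C2-vector}. On $A$ the segment joining $\bm x$ to $(\bm K\wedge\bm L)/\bm m$ stays in $\mathcal N_{\delta_S}(S)$, so a second-order Taylor expansion yields $F((\bm K\wedge\bm L)/\bm m)=F(\bm x)+\sum_j\partial_{x_j}F(\bm x)\,D_j+R$ with $|R|\le\tfrac{d}{2}M_S\sum_jD_j^2$ there. Chernoff bounds for Poisson tails give $\PP(A^c)\le Ce^{-cm_{\min}}$ uniformly on $S$, and $\EE[D_j^2]=O(m_j^{-1})$ since $\mathrm{Var}(\mathrm{Poi}(m_jx_j))=m_jx_j$; together with $|F|\le1$ and Cauchy--Schwarz, these imply that restricting to $A$ and discarding $\EE[R]=O(m_{\min}^{-1})$ costs only $o(m_{\min}^{-1/2})$. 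The one genuinely nontrivial ingredient is $\EE[D_j]$: writing $K_j\wedge L_j=\tfrac12\bigl(K_j+L_j-|K_j-L_j|\bigr)$ gives $\EE[K_j\wedge L_j]=m_jx_j-\tfrac12\EE|S_j|$ where $S_j:=K_j-L_j$ is Skellam with mean $0$ and variance $2m_jx_j$; the central limit theorem for $S_j/\sqrt{2m_jx_j}$ together with uniform integrability (its second moment equals $1$) gives $\EE|S_j|=\sqrt{2m_jx_j}\cdot\sqrt{2/\pi}+o(\sqrt{m_j})=2\sqrt{m_jx_j/\pi}+o(\sqrt{m_j})$, hence $\EE[D_j]=-m_j^{-1/2}\sqrt{x_j/\pi}+o(m_j^{-1/2})$. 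Assembling,
\[
T_{\bm m}(\bm x)=F(\bm x)-\sum_{j=1}^dm_j^{-1/2}\,\partial_{x_j}F(\bm x)\,\sqrt{x_j/\pi}+o(m_{\min}^{-1/2})=F(\bm x)-V(\bm x;\bm m)+o(m_{\min}^{-1/2}),
\]
and substituting into the variance identity above gives $\Var(F_{\bm m,n}(\bm x))=n^{-1}\sigma^2(\bm x)-n^{-1}V(\bm x;\bm m)+o(n^{-1}m_{\min}^{-1/2})$.

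The main obstacle is the sharp, uniform mean-absolute-deviation asymptotics of the Skellam law, namely $\EE|K-L|=2\sqrt{\lambda/\pi}+o(\sqrt\lambda)$ as $\lambda\to\infty$ for independent $K,L\sim\mathrm{Poi}(\lambda)$, applied with $\lambda=m_jx_j$ uniformly over $x_j$ in the compact projection of $S$: this is exactly what produces the factor $\sqrt{x_j/\pi}$ and hence the variance-reduction term $V(\bm x;\bm m)$, and one must check that this expansion, the tail bound on $A^c$, and the Taylor remainder are all uniform on $S$ (which they are, by Chernoff bounds and the uniform derivative bounds of Assumption~\ref{ass:C2-vector}). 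Everything else is linear bookkeeping of second moments and Taylor remainders.
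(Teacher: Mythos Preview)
Your proposal is correct and follows essentially the same approach as the paper: the bias is reduced to Proposition~\ref{prop:SM-bias-vector} by linearity, and the variance is obtained from the identity $\Var(F_{\bm m,n}(\bm x))=n^{-1}\bigl\{\EE[F((\bm K\wedge\bm L)/\bm m)]-F_{\bm m}(\bm x)^2\bigr\}$ by a near/far Taylor expansion together with the Skellam mean-absolute-deviation asymptotic $\EE|K_j-L_j|=2\sqrt{m_jx_j/\pi}+o(\sqrt{m_j})$. The only cosmetic differences are that the paper packages the variance computation through centered summands $Z_{i,\bm m}(\bm x)$ rather than via $\EE[F_n(\bm a)F_n(\bm b)]$, and cites a normal-approximation reference for the Skellam asymptotic where you invoke the CLT plus uniform integrability.
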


\begin{proof}
The proof is given in \ref{app}.
\end{proof}

The bias--variance expansion derived above implies that the leading bias of $F_{\bb{m},n}(\bb{x})$ coincides with that of the deterministic smoothed estimator $F_{\bb{m}}(\bb{x})$, while the variance differs from that of the empirical cdf by a negative correction term of order $n^{-1} m_{\min}^{-1/2}$. This correction depends on the local slope of $F$ through its first--order partial derivatives and captures the variance--stabilizing effect of Poisson smoothing. Consequently, the estimator exhibits a nontrivial bias--variance trade--off, i.e., larger values of $\bb{m}$ reduce the smoothing bias but weaken the variance reduction, whereas smaller values of $\bb{m}$ strengthen variance stabilization at the expense of increased bias.

\subsection{Mean squared error and optimal smoothing}

This subsection studies the mean squared error of the Sz\'{a}sz--Mirakyan estimator by combining the bias and variance expansions obtained in the previous subsection. The resulting expressions allow us to identify optimal smoothing rates.

\begin{corollary}\label{cor:szasz-mse}
Assume Assumption~\ref{ass:C2-vector} and take equal smoothing levels $m_1 = \cdots = m_d\equiv m$. Then, for every compact set $S\subseteq (0,\infty)^d$ and uniformly for $\bb{x}\in S$, as $n\to\infty$ and $m\to\infty$,
\[
\MSE\big(F_{\bb{m},n}(\bb{x})\big) = n^{-1} \, \sigma^2(\bb{x}) -{n^{-1}}{m^{-1/2}} \, V_{\mathrm{}}(\bb{x}) + m^{-2} \, B^2(\bb{x}) + O\big(n^{-1}m^{-1}\big) + o\big(m^{-2}\big),
\]
where
\[
\sigma^2(\bb{x}) = F(\bb{x})\{1-F(\bb{x})\},\qquad V_{\mathrm{}}(\bb{x}) = \sum_{j=1}^d \partial_{x_j}F(\bb{x}) \, \sqrt{\frac{x_j}{\pi}}, \qquad B(\bb{x}) = \frac{1}{2}\sum_{j=1}^d x_j \, \partial^2_{x_jx_j}F(\bb{x}).
\]
In particular, if $V_{\mathrm{}}(\bb{x}) > 0$ and $B(\bb{x}) \neq 0$, the asymptotically optimal choice of $m$ with respect to the MSE at $\bb{x}$ is
\[
m_{\mathrm{opt}}(\bb{x}) = n^{2/3}\left[\frac{4 \, B^2(\bb{x})}{V_{\mathrm{}}(\bb{x})}\right]^{2/3},
\]
in which case
\[
\MSE\big(F_{m_{\mathrm{opt}},n}(\bb{x})\big) = {n^{-1}} \, \sigma^2(\bb{x}) - n^{-4/3} \, \frac{3}{4}\left[\frac{V^4_{\mathrm{}}(\bb{x})}{4 \, B^2(\bb{x})}\right]^{1/3} + o\big(n^{-4/3}\big).
\]
\end{corollary}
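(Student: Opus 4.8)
The plan is to obtain the MSE expansion by simply combining the bias and variance expansions of Theorem~\ref{thm:szasz-bias-var}, specialized to the isotropic regime $m_1=\cdots=m_d\equiv m$, through the elementary decomposition $\operatorname{MSE}(F_{\bm m,n}(\bm x))=\Bias^2(F_{\bm m,n}(\bm x))+\Var(F_{\bm m,n}(\bm x))$, and then to identify the optimal rate by minimizing the resulting $m$-dependent part of the MSE over $m>0$. No new probabilistic input is needed beyond Theorem~\ref{thm:szasz-bias-var}; the work is bookkeeping of orders plus a one-variable calculus problem.

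For the expansion itself, setting $m_j\equiv m$ in Theorem~\ref{thm:szasz-bias-var} gives, uniformly on $S$, $\Bias(F_{\bm m,n}(\bm x))=m^{-1}B(\bm x)+o(m^{-1})$ and, using $V(\bm x;\bm m)=m^{-1/2}V(\bm x)$, $\Var(F_{\bm m,n}(\bm x))=n^{-1}\sigma^2(\bm x)-n^{-1}m^{-1/2}V(\bm x)+o(n^{-1}m^{-1/2})$. Squaring the bias, the cross term is $2m^{-1}B(\bm x)\cdot o(m^{-1})=o(m^{-2})$ because $B$ is bounded on $S$ under Assumption~\ref{ass:C2-vector}, so $\Bias^2(F_{\bm m,n}(\bm x))=m^{-2}B^2(\bm x)+o(m^{-2})$; adding the two pieces yields the displayed MSE formula. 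The two error terms $o(n^{-1}m^{-1/2})$ and $o(m^{-2})$ must be kept separate, since neither of $n^{-1}m^{-1/2}$ and $m^{-2}$ dominates the other across the full joint range of $(n,m)$.

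For the optimal smoothing parameter at a fixed $\bm x$ with $V(\bm x)B(\bm x)\neq 0$, I would first note that since $F$ is coordinatewise nondecreasing one has $\partial_{x_j}F(\bm x)\ge 0$, hence $V(\bm x)=\sum_{j=1}^d\partial_{x_j}F(\bm x)\sqrt{x_j/\pi}\ge 0$, and together with $V(\bm x)\neq 0$ this gives $V(\bm x)>0$ (while $B^2(\bm x)>0$). Discarding the $\bm x$-only constant $n^{-1}\sigma^2(\bm x)$ and the remainders, it remains to minimize $g(m):=-n^{-1}m^{-1/2}V(\bm x)+m^{-2}B^2(\bm x)$ over $m>0$: since $g(m)\to+\infty$ as $m\to0^+$ and $g(m)\to0^-$ as $m\to\infty$, $g$ attains an interior minimum, and solving $g'(m)=\tfrac12 n^{-1}m^{-3/2}V(\bm x)-2m^{-3}B^2(\bm x)=0$ gives the unique critical point $m^{3/2}=4nB^2(\bm x)/V(\bm x)$, i.e.\ $m_{\mathrm{opt}}(\bm x)=n^{2/3}[4B^2(\bm x)/V(\bm x)]^{2/3}$, which is therefore the minimizer. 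Substituting $m=m_{\mathrm{opt}}(\bm x)$, with $m_{\mathrm{opt}}^{-1/2}=n^{-1/3}[V(\bm x)/(4B^2(\bm x))]^{1/3}$ and $m_{\mathrm{opt}}^{-2}=n^{-4/3}[V(\bm x)/(4B^2(\bm x))]^{4/3}$, a short simplification gives $-n^{-1}m_{\mathrm{opt}}^{-1/2}V(\bm x)+m_{\mathrm{opt}}^{-2}B^2(\bm x)=(4^{-4/3}-4^{-1/3})n^{-4/3}V^{4/3}(\bm x)B^{-2/3}(\bm x)=-\tfrac34 n^{-4/3}[V^4(\bm x)/(4B^2(\bm x))]^{1/3}$, while both remainders are $o(n^{-4/3})$ because $n^{-1}m_{\mathrm{opt}}^{-1/2}\asymp n^{-4/3}\asymp m_{\mathrm{opt}}^{-2}$; this is the claimed optimal MSE.

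There is no deep obstacle here; the points that require a little care are bookkeeping ones. First, $m_{\mathrm{opt}}(\bm x)$ is generally not an integer, so the statement should be read with $m=\lceil m_{\mathrm{opt}}(\bm x)\rceil$ (or any integer sequence asymptotically equivalent to $m_{\mathrm{opt}}(\bm x)$), which changes nothing at the displayed order. Second, to justify that $m_{\mathrm{opt}}$ is \emph{asymptotically} optimal — not merely the minimizer of the leading terms — one observes that for any competing sequence $m=m_n\to\infty$ the $m$-dependent part of the MSE equals $g(m_n)+o(\max\{n^{-1}m_n^{-1/2},m_n^{-2}\})\ge g(m_{\mathrm{opt}})+o(n^{-4/3})$, so no sequence beats $m_{\mathrm{opt}}$ at order $n^{-4/3}$. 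Third, unlike the general MSE expansion (which is uniform on $S$), the final identity at $m_{\mathrm{opt}}$ is pointwise in $\bm x$, since both $m_{\mathrm{opt}}(\bm x)$ and the nondegeneracy hypothesis $V(\bm x)B(\bm x)\neq0$ depend on the point.
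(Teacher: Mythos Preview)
Your proposal is correct and follows exactly the approach the paper takes: the corollary is stated without a separate proof precisely because it is an immediate consequence of the bias--variance decomposition $\MSE=\Bias^2+\Var$ applied to Theorem~\ref{thm:szasz-bias-var} with $m_j\equiv m$, together with the elementary one-variable optimization you carry out. Your additional remarks on the sign of $V(\bm x)$, the integer rounding of $m_{\mathrm{opt}}$, and the verification that no competing sequence beats $m_{\mathrm{opt}}$ at order $n^{-4/3}$ are careful touches that the paper leaves implicit.
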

Corollary~\ref{cor:szasz-mse} makes explicit the competing contributions of variance reduction and smoothing bias in the mean squared error of the Sz\'{a}sz--Mirakyan estimator under isotropic smoothing. The negative term of order $n^{-1}m^{-1/2}$ reflects the variance stabilization induced by Poisson smoothing, while the positive term of order $m^{-2}$ captures the second--order bias identified in Proposition~\ref{prop:SM-bias-vector}. Balancing these two terms yields an optimal smoothing rate $m_{\mathrm{opt}}(x)\asymp n^{2/3}$, which coincides with the critical regime where neither bias nor variance dominates.

The next result extends this analysis to an integrated setting, allowing for a global assessment of smoothing performance over compact subsets of $(0,\infty)^d$. The following provides a similar result for the integrated MSE.

\begin{corollary}\label{thm:szasz-mise}
Assume Assumption~\ref{ass:C2-vector} and take equal smoothing levels $m_1 = \cdots = m_d\equiv m$. Then, for every compact set $S\subseteq (0,\infty)^d$, as $n\to\infty$ and $m\to\infty$,
\begin{align*}
\IMSE_S\big(F_{\bb{m},n}\big)
&= n^{-1}\int_S \sigma^2(\bb{x}) \, \mathrm{d}\bb{x} - n^{-1} m^{-1/2}\int_S V_{\mathrm{}}(\bb{x}) \, \mathrm{d}\bb{x} \\
&\qquad + m^{-2}\int_S B^2(\bb{x}) \, \mathrm{d}\bb{x} + O\big(n^{-1} m^{-1}\big) + o\big(m^{-2}\big).
\end{align*}
In particular, if
\[
\int_S V_{\mathrm{}}(\bb{x}) \, \mathrm{d}\bb{x} > 0
\qquad\text{and}\qquad
\int_S B^2(\bb{x}) \, \mathrm{d}\bb{x} > 0,
\]
the asymptotically optimal choice of $m$ with respect to $\IMSE_S$ is
\[
m_{\mathrm{opt}} = n^{2/3}\left[ \frac{4\displaystyle\int_S B^2(\bb{x}) \, \mathrm{d}\bb{x}} {\displaystyle\int_S V_{\mathrm{}}(\bb{x}) \, \mathrm{d}\bb{x}} \right]^{2/3},
\qquad
\bb{m}_{\mathrm{opt}} := m_{\mathrm{opt}}\bb{1}_d,
\]
in which case, as $n\to\infty$,
\[
\IMSE_S\big(F_{\bb{m}_{\mathrm{opt}},n}\big) = n^{-1}\int_S \sigma^2(\bb{x}) \, \mathrm{d}\bb{x} - n^{-4/3} \, \frac{3}{4} \left[ \frac{\big\{\displaystyle\int_S V_{\mathrm{}}(\bb{x}) \, \mathrm{d}\bb{x}\big\}^{4}} {4\displaystyle\int_S B^2(\bb{x}) \, \mathrm{d}\bb{x}} \right]^{1/3} + o\big(n^{-4/3}\big).
\]
\end{corollary}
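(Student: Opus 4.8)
The plan is to obtain the $\IMSE_S$ expansion by integrating the pointwise expansion of Corollary~\ref{cor:szasz-mse} over $S$, and then to locate the $\IMSE_S$-optimal smoothing level by elementary one-variable calculus on the resulting two-term correction. In particular, the second Corollary is essentially a \emph{corollary of the first one} plus integration.

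\emph{Step 1 (integrate the pointwise expansion).} By definition $\IMSE_S(F_{\bm m,n}) = \int_S \operatorname{MSE}(F_{\bm m,n}(\bm x))\,\mathrm d\bm x$. Corollary~\ref{cor:szasz-mse} gives, \emph{uniformly} for $\bm x\in S$, the expansion $\operatorname{MSE}(F_{\bm m,n}(\bm x)) = n^{-1}\sigma^2(\bm x) - n^{-1}m^{-1/2}V(\bm x) + m^{-2}B^2(\bm x) + R_n(\bm x)$ with $\sup_{\bm x\in S}|R_n(\bm x)| = o(n^{-1}m^{-1/2}) + o(m^{-2})$. Since $F\in C^2((0,\infty)^d)$, the functions $\sigma^2$, $V$ and $B^2$ are continuous, hence bounded and integrable on the compact set $S$; integrating the three leading terms produces $\int_S\sigma^2$, $\int_S V$, $\int_S B^2$, and the remainder contributes at most $\operatorname{vol}(S)\cdot\sup_{\bm x\in S}|R_n(\bm x)| = o(n^{-1}m^{-1/2})+o(m^{-2})$. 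This yields the displayed $\IMSE_S$ expansion.

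\emph{Step 2 (optimize in $m$).} Write $A := \int_S V(\bm x)\,\mathrm d\bm x$ and $D := \int_S B^2(\bm x)\,\mathrm d\bm x$; by hypothesis $D>0$, and $A>0$ since $V\ge 0$ is continuous and positive on a set of positive Lebesgue measure whenever $F$ is nondegenerate near $S$ (this plays the role of the condition $V(\bm x)\,B(\bm x)\neq 0$ in the pointwise statement). The $m$-dependent part of the leading correction is $\phi(m) = m^{-2}D - n^{-1}m^{-1/2}A$, with $\phi'(m) = -2m^{-3}D + \tfrac12 n^{-1}m^{-3/2}A$. Solving $\phi'(m)=0$ gives $m^{3/2} = 4nD/A$, i.e. $m_{\mathrm{opt}} = (4nD/A)^{2/3} = n^{2/3}(4D/A)^{2/3}$, and $\phi''(m_{\mathrm{opt}})>0$ confirms this is the minimizer. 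Substituting $m_{\mathrm{opt}}^{-1/2} = n^{-1/3}(4D/A)^{-1/3}$ and $m_{\mathrm{opt}}^{-2} = n^{-4/3}(4D/A)^{-4/3}$ into $\phi$ and simplifying gives $\phi(m_{\mathrm{opt}}) = -3\cdot 4^{-4/3}\,n^{-4/3}A^{4/3}D^{-1/3} = -\tfrac34\,n^{-4/3}\big[A^4/(4D)\big]^{1/3}$, which is exactly the claimed second-order term, the leading $n^{-1}\int_S\sigma^2$ term being unaffected.

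\emph{Main point of care.} There is no substantial obstacle here; the only two bookkeeping issues are the following. First, the term-by-term integration of the $o(\cdot)$ remainder is legitimate precisely because Corollary~\ref{cor:szasz-mse} furnishes it \emph{uniformly} on $S$, and this uniformity must be invoked explicitly. Second, $m_{\mathrm{opt}}$ is in general not an integer, so one takes $m=\lceil m_{\mathrm{opt}}\rceil$ (or the nearest integer); since $\phi$ is smooth near its minimum with $\phi''(m_{\mathrm{opt}}) = O(n^{-8/3})$ and $|\lceil m_{\mathrm{opt}}\rceil - m_{\mathrm{opt}}|\le 1$, a second-order Taylor estimate shows $\phi(\lceil m_{\mathrm{opt}}\rceil) - \phi(m_{\mathrm{opt}}) = O(n^{-8/3}) = o(n^{-4/3})$, which is absorbed into the stated error.
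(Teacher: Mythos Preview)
Your proposal is correct and matches the paper's treatment: the paper states this result as a corollary with no separate proof, relying precisely on integrating the uniform pointwise MSE expansion of Corollary~\ref{cor:szasz-mse} over the compact set $S$ and then performing the same one-variable optimization in $m$. Your remarks on the need for uniformity to integrate the $o(\cdot)$ remainder and on the harmless integer rounding of $m_{\mathrm{opt}}$ are appropriate refinements.
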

The optimal rate obtained here coincides with the critical smoothing regime identified in the pointwise analysis and reflects the balance between the variance reduction induced by Poisson smoothing and the second--order bias of the operator.

The resulting rate is comparable to that obtained for kernel cdf estimators and Bernstein polynomial estimators under analogous smoothness conditions \citep[see, e.g.,][]{Jones1990,Leblanc2012a}, while retaining the advantage of respecting the nonnegative orthant support of the distribution. In particular, unlike kernel methods, the Sz\'{a}sz--Mirakyan estimator avoids boundary corrections on $[0,\infty)^d$ \citep[see, e.g.,][]{HanebeckKlar2021}, and, in contrast to Bernstein estimators, naturally accommodates unbounded support \citep[see, e.g.,][]{Gawronski1980}, making it a flexible alternative for cdf estimation on the nonnegative orthant. These MSE and IMSE analyses set the stage for the efficiency comparison with the empirical cdf developed in the next subsection.

\subsection{Asymptotic deficiency of the empirical cdf}

This subsection compares the efficiency of the empirical cdf and the Sz\'{a}sz--Mirakyan estimator through local and global deficiency measures. These quantities characterize the effective sample size required by the empirical cdf to achieve the same mean squared error performance as the smoothed estimator.

\begin{theorem}\label{thm:deficiency-sm}
For every compact set $S\subseteq (0,\infty)^d$ such that $\int_S\sigma^2(\bb{x}) \, \mathrm{d}\bb{x} > 0$, fix $\bb{x}\in S$ such that $0 < F(\bb{x}) < 1$. Define the local effective empirical sample size associated with the Sz\'{a}sz--Mirakyan estimator $F_{\bb{m},n}$ at $\bb{x}$ by
\[
L^{\mathrm{SM}}(n,\bb{x}) := \min\Big\{k\in\NN: \MSE\big(F_k(\bb{x})\big) \leq \MSE\big(F_{\bb{m},n}(\bb{x})\big)\Big\},
\]
and define the global effective empirical sample size associated with $F_{\bb{m},n}$ on $S$ by
\[
G^{\mathrm{SM}}_S(n) := \min\Big\{k\in\NN:\IMSE_S(F_k) \leq \IMSE_S(F_{\bb{m},n})\Big\},
\]
where $F_k$ denotes the empirical cdf based on a sample of size $k$. The corresponding local and global deficiencies of the empirical cdf are
\[
L^{\mathrm{SM}}(n,\bb{x})-n
\qquad\text{and}\qquad
G^{\mathrm{SM}}_S(n)-n,
\]
respectively. Assume Assumption~\ref{ass:C2-vector} and take equal smoothing levels $m_1 = \cdots = m_d\equiv m$. Throughout this theorem, all limits are taken as $n\to\infty$ and $m\to\infty$. Then, if $nm^{-2}\to 0$,
\[
L^{\mathrm{SM}}(n,\bb{x}) = n\{1 + o_{\bb{x}}(1)\} \qquad \text{and} \qquad G^{\mathrm{SM}}_S(n) = n\{1 + o(1)\}.
\]
Moreover,
\begin{enumerate}[(i)]
\item \textit{(Large-$m$, bias-negligible regime.)} if $m n^{-2/3}\to \infty$ and $m n^{-2}\to 0$, then
\begin{equation}\label{eq:def-sm-regime-a}
\begin{aligned}
L^{\mathrm{SM}}(n,\bb{x})-n
&= \frac{n}{m^{1/2}} \left\{ \frac{V(\bb{x})}{\sigma^2(\bb{x})} + o_{\bb{x}}(1) \right\}, \\
G^{\mathrm{SM}}_S(n)-n
&= \frac{n}{m^{1/2}} \left\{ \frac{\int_S V(\bb{x}) \, \mathrm{d}\bb{x}}{\int_S\sigma^2(\bb{x}) \, \mathrm{d}\bb{x}} + o(1) \right\};
\end{aligned}
\end{equation}

\item \textit{(Critical regime.)} if $m n^{-2/3}\to c$, where $c$ is a positive constant, then
\begin{equation}\label{eq:def-sm-regime-b}
\begin{aligned}
L^{\mathrm{SM}}(n,\bb{x})-n
&= n^{2/3} \left\{ c^{-1/2}\frac{V(\bb{x})}{\sigma^2(\bb{x})} - c^{-2}\frac{B^2(\bb{x})}{\sigma^2(\bb{x})} + o_{\bb{x}}(1) \right\}, \\
G^{\mathrm{SM}}_S(n)-n
&= n^{2/3} \left\{ c^{-1/2}\frac{\int_S V(\bb{x}) \, \mathrm{d}\bb{x}}{\int_S \sigma^2(\bb{x}) \, \mathrm{d}\bb{x}} - c^{-2}\frac{\int_S B^2(\bb{x}) \, \mathrm{d}\bb{x}}{\int_S \sigma^2(\bb{x}) \, \mathrm{d}\bb{x}} + o(1) \right\}.
\end{aligned}
\end{equation}
\end{enumerate}
Here, $V(\bb{x})$ and $B(\bb{x})$ are defined in Corollary~\ref{cor:szasz-mse}. Consequently, in the large-$m$ regime, $L^{\mathrm{SM}}(n,\bb{x})-n\to\infty$ if $V(\bb{x})>0$, and $G^{\mathrm{SM}}_S(n)-n\to\infty$ if $\int_S V(\bb{x}) \, \mathrm{d}\bb{x}>0$. In the critical regime, $L^{\mathrm{SM}}(n,\bb{x})-n\to\infty$ if $c^{3/2}V(\bb{x})>B^2(\bb{x})$, and $G^{\mathrm{SM}}_S(n)-n\to\infty$ if
\[
c^{3/2}\int_S V(\bb{x}) \, \mathrm{d}\bb{x} > \int_S B^2(\bb{x}) \, \mathrm{d}\bb{x}.
\]
Under these conditions, the empirical cdf is asymptotically deficient with respect to the Sz\'{a}sz--Mirakyan cdf estimator.
\end{theorem}

\begin{proof}
The proof is given in \ref{app}.
\end{proof}

The large-$m$ and critical regimes reflect the relative magnitude of the variance-reduction and bias terms in the local mean squared error expansion. Since larger values of $m$ make the Poisson kernel more concentrated, the first regime is described as a large-$m$, bias-negligible regime rather than as a high-smoothing regime. In this regime, the smoothing bias is asymptotically negligible relative to the variance reduction, so that the efficiency gain of the Sz\'{a}sz--Mirakyan estimator is driven primarily by variance stabilization. In contrast, the critical regime corresponds to a balance between bias and variance corrections, where both contributions enter at the same asymptotic order. This balance produces a phase transition in the efficiency behaviour, with the deficiency growth rate depending explicitly on both curvature and slope characteristics of the underlying cdf.

A small-$m$ regime, characterized by $m n^{-2/3}\to 0$, is not used here for positive deficiency conclusions. At a point $\bb{x}$ such that $B(\bb{x})\neq 0$, the smoothing bias term dominates the variance-reduction term, and the mean squared error of the Sz\'{a}sz--Mirakyan estimator exceeds that of the empirical cdf asymptotically. Likewise, in the integrated setting, the same conclusion follows when $\int_S B^2(\bb{x})\,\mathrm{d}\bb{x}>0$. If the relevant leading bias coefficient vanishes, the expansion displayed above does not by itself determine the sign of the MSE difference, and higher-order terms would be needed.

\subsection{Limit distributions and uniform consistency}

This subsection establishes the asymptotic distributional behavior and uniform consistency of the Sz\'{a}sz--Mirakyan cdf estimator on interior compact sets. These results complement the mean squared error and deficiency analyses by showing that smoothing does not alter the first--order limiting distribution of the estimator under suitable growth conditions on the smoothing parameters.

\begin{theorem}\label{thm:sm-cdf-clt-interior}
Suppose Assumption~\ref{ass:C2-vector} holds. Fix $\bb{x}\in (0,\infty)^d$ such that $0 < F(\bb{x}) < 1$. Then, as $n\to\infty$, the following convergence results hold.
\begin{enumerate}[(a)]
\item If $m_{\min} \to \infty$, then
\begin{equation}\label{eq:sm-clt-interior-mean}
\sqrt{n} \, \Big\{ F_{\bb{m},n}(\bb{x}) - \EE\big[F_{\bb{m},n}(\bb{x})\big] \Big\} \xrightarrow{d} \mathcal N\Big( 0, F(\bb{x})\{1-F(\bb{x})\} \Big).
\end{equation}

\item If, in addition, $\sqrt{n} \, m_{\min}^{-1} \to 0$, then
\[
\sqrt{n} \, \big\{ F_{\bb{m},n}(\bb{x}) - F(\bb{x}) \big\} \xrightarrow{d} \mathcal N\Big( 0, F(\bb{x})\{1-F(\bb{x})\} \Big).
\]
\end{enumerate}
\end{theorem}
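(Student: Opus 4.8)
The plan is to treat $F_{\bm m,n}(\bm x)$ as a normalized sum of a row-wise i.i.d.\ triangular array and to apply the Lindeberg--Feller central limit theorem. Using the sample-average representation~\eqref{eq:distribution_est_2}, write $F_{\bm m,n}(\bm x)=n^{-1}\sum_{i=1}^n Y_{n,i}$, where $Y_{n,i}=\prod_{j=1}^d\PP\big(\mathrm{Poi}(m_j x_j)\ge W_{ij}\big)$ and $W_{ij}=\lceil m_j X_{ij}\rceil$. For fixed $n$ (hence fixed $\bm m=\bm m(n)$), the variables $Y_{n,1},\ldots,Y_{n,n}$ are i.i.d.\ and take values in $[0,1]$, so the centered variables $Z_{n,i}:=Y_{n,i}-\EE[Y_{n,i}]$ satisfy $|Z_{n,i}|\le 1$ almost surely. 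Put $s_n^2:=\Var(Y_{n,1})=n\,\Var\big(F_{\bm m,n}(\bm x)\big)$.

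The first step is to calibrate the variance. By Theorem~\ref{thm:szasz-bias-var}, $\Var\big(F_{\bm m,n}(\bm x)\big)=n^{-1}\sigma^2(\bm x)-n^{-1}V(\bm x;\bm m)+o\big(n^{-1}m_{\min}^{-1/2}\big)$, and since $V(\bm x;\bm m)=O\big(m_{\min}^{-1/2}\big)\to 0$ as $m_{\min}\to\infty$, it follows that $s_n^2\to\sigma^2(\bm x)=F(\bm x)\{1-F(\bm x)\}>0$. In particular $\Var\big(\sqrt n\{F_{\bm m,n}(\bm x)-\EE[F_{\bm m,n}(\bm x)]\}\big)=s_n^2\to\sigma^2(\bm x)$, which fixes the limiting variance.

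The second step is the Lindeberg condition. Since $s_n^2\to\sigma^2(\bm x)>0$ and $n\to\infty$, we have $\sqrt{n s_n^2}\to\infty$, so for each $\varepsilon>0$ there is an $n_0$ with $\varepsilon\sqrt{n s_n^2}>1\ge|Z_{n,i}|$ for all $n\ge n_0$ and all $i$; hence $\II\{|Z_{n,i}|>\varepsilon\sqrt{n s_n^2}\}=0$ and the Lindeberg ratio $(n s_n^2)^{-1}\sum_{i=1}^n\EE\big[Z_{n,i}^2\,\II\{|Z_{n,i}|>\varepsilon\sqrt{n s_n^2}\}\big]$ is exactly zero for $n$ large. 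The Lindeberg--Feller theorem then gives $(n s_n^2)^{-1/2}\sum_{i=1}^n Z_{n,i}\xrightarrow{d}\mathcal N(0,1)$, and since $\sqrt n\{F_{\bm m,n}(\bm x)-\EE[F_{\bm m,n}(\bm x)]\}=s_n\cdot(n s_n^2)^{-1/2}\sum_{i=1}^n Z_{n,i}$ with $s_n\to\sigma(\bm x)$, Slutsky's theorem yields part~(a). For part~(b), decompose $\sqrt n\{F_{\bm m,n}(\bm x)-F(\bm x)\}=\sqrt n\{F_{\bm m,n}(\bm x)-\EE[F_{\bm m,n}(\bm x)]\}+\sqrt n\,\Bias\big(F_{\bm m,n}(\bm x)\big)$; by Theorem~\ref{thm:szasz-bias-var}, $\Bias\big(F_{\bm m,n}(\bm x)\big)=\tfrac12\sum_{j=1}^d\tfrac{x_j}{m_j}\partial_{x_jx_j}^2F(\bm x)+o\big(m_{\min}^{-1}\big)=O_{\bm x}\big(m_{\min}^{-1}\big)$, so the bias term is $O_{\bm x}\big(\sqrt n\,m_{\min}^{-1}\big)=o_{\bm x}(1)$ under the extra hypothesis $\sqrt n\,m_{\min}^{-1}\to 0$, and part~(a) together with Slutsky's theorem completes the argument.

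I expect no serious obstacle here; the only point requiring care is that the summands genuinely form a triangular array (because $\bm m$ varies with $n$), which rules out the classical i.i.d.\ central limit theorem and forces the Lindeberg--Feller version. The uniform bound $|Z_{n,i}|\le 1$ makes the Lindeberg condition automatic once the variance has been pinned down via Theorem~\ref{thm:szasz-bias-var}, so the proof is essentially a bookkeeping exercise combining that variance expansion with the bias expansion and Slutsky's theorem.
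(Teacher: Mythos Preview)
Your proposal is correct and follows essentially the same route as the paper: represent $F_{\bm m,n}(\bm x)$ as a row-wise i.i.d.\ triangular array of $[0,1]$-valued summands, use Theorem~\ref{thm:szasz-bias-var} to show the per-summand variance converges to $F(\bm x)\{1-F(\bm x)\}>0$, verify Lindeberg trivially from the uniform bound $|Z_{n,i}|\le 1$, and then handle part~(b) by the bias expansion and Slutsky's theorem. There is nothing to add.
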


\begin{proof}
The proof is given in \ref{app}.
\end{proof}

The results show that the Sz\'{a}sz--Mirakyan estimator shares the same first--order asymptotic distribution as the empirical cdf at interior points. Centering by the smoothed mean yields a central limit theorem under the minimal requirement that the smoothing parameters diverge, while additional undersmoothing ensures that the bias becomes asymptotically negligible and centering by the true cdf is valid. As a result, Poisson smoothing improves finite-sample efficiency without affecting the classical $\sqrt{n}$ convergence rate or limiting variance. Almost sure uniform consistency of the Sz\'{a}sz--Mirakyan estimator on compact subsets of the nonnegative orthant is provided in the following theorem.

\begin{theorem}\label{thm:sm-cdf-uniform}
Assume that $F$ is continuous on every compact subset of $[0,\infty)^d$. Then, for any $L > 0$,
\[
\sup_{\bb{x}\in[0,L]^d} \big|F_{\bb{m},n}(\bb{x})-F(\bb{x})\big| \xrightarrow{a.s.} 0,
\]
as $n\to\infty$ and $m_{\min}\to\infty$. Consequently, $F_{\bb{m},n}$ converges to $F$ almost surely, uniformly on compact subsets of $[0,\infty)^d$.
\end{theorem}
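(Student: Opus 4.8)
The plan is to reduce the uniform a.s.\ convergence of $F_{\bm m,n}$ to two ingredients: the classical uniform a.s.\ convergence of the empirical c.d.f.\ $F_n$ (a multivariate Glivenko--Cantelli statement) and a deterministic, uniform control of how much the Sz\'{a}sz--Mirakyan smoothing operator can perturb a continuous c.d.f.\ on a compact set. Write $F_{\bm m,n}(\bm x) - F(\bm x) = \big(F_{\bm m,n}(\bm x) - F_{\bm m}(\bm x)\big) + \big(F_{\bm m}(\bm x) - F(\bm x)\big)$, where $F_{\bm m}$ is the smoothing of the \emph{true} $F$. The first bracket is the smoothing of the centered empirical process $F_n - F$; since the Poisson weights $P_{\bm k,\bm m}(\bm x)$ are nonnegative and sum to $1$ over $\bm k\in\NN_0^d$, the operator $G\mapsto G_{\bm m}$ is a contraction in sup-norm, so
\[
\sup_{\bm x\in[0,L]^d}\big|F_{\bm m,n}(\bm x) - F_{\bm m}(\bm x)\big|
\;\le\; \sup_{\bm y\in[0,\infty)^d}\big|F_n(\bm y) - F(\bm y)\big|
\;\xrightarrow{a.s.}\; 0
\]
by the multivariate Glivenko--Cantelli theorem. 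The second bracket is purely analytic and does not involve the sample.

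The core of the argument is therefore to show that $\sup_{\bm x\in[0,L]^d}|F_{\bm m}(\bm x)-F(\bm x)|\to 0$ as $m_{\min}\to\infty$ for any $F$ that is continuous on $[0,L+1]^d$ (say). I would proceed by the standard approximation-operator estimate: fix $\varepsilon>0$, and use uniform continuity of $F$ on the compact enlargement $[0,L+1]^d$ to get $\delta>0$ with $|F(\bm y)-F(\bm x)|<\varepsilon$ whenever $\|\bm y-\bm x\|_1\le\delta$ and $\bm x,\bm y\in[0,L]^d$-neighborhood. Then split
\[
F_{\bm m}(\bm x)-F(\bm x)
=\sum_{\bm k\in\NN_0^d}\Big\{F(\bm k/\bm m)-F(\bm x)\Big\}P_{\bm k,\bm m}(\bm x)
\]
into the region where $\|\bm k/\bm m-\bm x\|_1\le\delta$ (contributing at most $\varepsilon$, using that the weights sum to $1$) and the complementary tail region; on the latter, the contribution is bounded by $2\|F\|_{\infty,[0,L]^d}\cdot\PP(\|\bm S/\bm m - \bm x\|_1>\delta)$ plus a term handling $\bm k/\bm m$ far outside the compact set (where one only has $|F|\le 1$), where $\bm S=(S_1,\dots,S_d)$ has independent components $S_j\sim\mathrm{Poi}(m_jx_j)$. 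Since $\mathrm{Var}(S_j/m_j)=x_j/m_j\le L/m_{\min}$, Chebyshev's inequality gives $\PP(|S_j/m_j-x_j|>\delta/d)\le dL/(m_{\min}\delta^2)\to 0$ uniformly in $\bm x\in[0,L]^d$, and a union bound over $j$ controls the $\ell_1$ deviation. Hence the tail contribution is $O(m_{\min}^{-1})$ uniformly in $\bm x$, so $\limsup_{m_{\min}\to\infty}\sup_{\bm x\in[0,L]^d}|F_{\bm m}(\bm x)-F(\bm x)|\le\varepsilon$, and letting $\varepsilon\downarrow 0$ completes the deterministic bound. (Alternatively, one may simply invoke Proposition~\ref{prop:SM-bias-vector} under the stronger $C^2$ hypothesis, but the Chebyshev argument is what delivers the result under mere continuity.)

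Combining the two pieces, for every fixed $L>0$,
\[
\sup_{\bm x\in[0,L]^d}\big|F_{\bm m,n}(\bm x)-F(\bm x)\big|
\;\le\;\sup_{\bm y\in[0,\infty)^d}\big|F_n(\bm y)-F(\bm y)\big|
+\sup_{\bm x\in[0,L]^d}\big|F_{\bm m}(\bm x)-F(\bm x)\big|
\;\xrightarrow{a.s.}\;0
\]
as $n\to\infty$ and $m_{\min}\to\infty$. The final ``consequently'' statement follows since every compact subset of $[0,\infty)^d$ is contained in some $[0,L]^d$. The main obstacle is making the deterministic smoothing bound fully uniform in $\bm x\in[0,L]^d$ under only continuity: one must be careful that the tail estimate $\PP(\|\bm S/\bm m-\bm x\|_1>\delta)$ is uniform (which it is, via the crude variance bound $x_j\le L$), and that the indices $\bm k$ with $\bm k/\bm m$ outside the compact region where uniform continuity was invoked are absorbed into this same tail probability — handled by enlarging the compact set by a fixed margin and noting $|F|\le 1$ everywhere.
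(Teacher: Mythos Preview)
Your proposal is correct and follows essentially the same approach as the paper: the same decomposition $F_{\bm m,n}-F=(F_{\bm m,n}-F_{\bm m})+(F_{\bm m}-F)$, the sup-norm contraction of the Poisson smoothing operator combined with multivariate Glivenko--Cantelli for the first term, and uniform continuity on an enlarged compact set $[0,L+1]^d$ together with a Chebyshev/Markov tail bound on $\|\bm K/\bm m-\bm x\|$ for the deterministic second term. The only cosmetic differences are that the paper bounds $\EE\|\bm\Delta\|^2$ directly rather than using a coordinatewise union bound, and handles the ``far'' region slightly more cleanly by noting that $\|\bm\Delta\|\le\delta\le 1$ already forces $\bm K/\bm m\in[0,L+1]^d$.
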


\begin{proof}
The proof is given in \ref{app}.
\end{proof}

\section{Near boundary properties}\label{sec:boundary}

This section investigates the behavior of the Sz\'{a}sz--Mirakyan cdf estimator in neighborhoods of the boundary of its support, where standard interior asymptotic expansions no longer apply.

To characterize the leading bias and variance terms in this regime, we impose additional smoothness conditions on the cdf in a neighborhood of the boundary. In particular, the following assumption ensures that $F$ admits well-defined second--order expansions along boundary faces, which allows the effect of smoothing to be isolated as the evaluation point approaches the boundary at a rate determined by the smoothing parameters.

\begin{assumption}\label{ass:szasz-C2-boundary}
Let $\mathcal X = [0,\infty)^d$. Assume that the cdf $F$ admits a density $f$ and that $F$ is twice continuously differentiable on a neighborhood of the boundary $\partial\mathcal X := \{\bb{x}\in\mathcal X:\min_{1 \leq j \leq d} x_j = 0\}$; that is, there exists $\delta > 0$ such that
\[
\max_{1 \leq j \leq d}\ \sup_{\bb{x}:\ \min_j x_j \leq \delta} \, \bigl|\partial_{x_j} F(\bb{x})\bigr| < \infty,\qquad \max_{1 \leq i,j \leq d}\ \sup_{\bb{x}:\ \min_j x_j \leq \delta} \, \bigl|\partial_{x_i x_j}^2 F(\bb{x})\bigr| < \infty.
\]
\end{assumption}

The following theorem characterizes the bias and variance expansions of $F_{\bb{m}, n}$ in a neighborhood of the boundary. The scaling $x_j = \lambda_j/m_j$ represents the natural boundary-layer regime of the Sz\'{a}sz--Mirakyan operator, ensuring that the Poisson kernel remains nondegenerate as the evaluation point approaches the boundary. This rate uniquely separates interior behavior from degenerate smoothing and leads to asymptotic bias and variance expansions that differ fundamentally from those obtained away from the boundary.

\begin{theorem}\label{thm:boundary-bias}\label{thm:Boundary_variance_cdf}
Fix $L > 0$ and, for each $\bb{m} = (m_1,\dots,m_d)$ with $m_{\min} = \min_{1 \leq j \leq d} m_j$, set
\[
\bb{x} = \Big(\frac{\lambda_1}{m_1},\dots,\frac{\lambda_d}{m_d}\Big), \qquad (\lambda_1,\dots,\lambda_d)\in[0,L]^d.
\]
Under Assumption~\ref{ass:szasz-C2-boundary}, as $n\to\infty$ and $m_{\min}\to\infty$,
\begin{align}
\Bias\big(F_{\bb{m},n}(\bb{x})\big)
&= \EE\big[F_{\bb{m},n}(\bb{x})\big] - F(\bb{x}) = \frac{1}{2} \sum_{j=1}^d \frac{\lambda_j}{m_j^2} \, \partial^2_{x_j x_j} F\big(x^{(j,0)}\big) + o\big(m_{\min}^{-2}\big), \label{thm:bias.boundary} \\
\Var\big(F_{\bb{m},n}(\bb{x})\big)
&= \frac{1}{n} \, \sigma^2(\bb{x}) + O\big(n^{-1}m_{\min}^{-1}\big), \label{thm:var.boundary}
\end{align}
uniformly over $(\lambda_1,\dots,\lambda_d)\in[0,L]^d$, where $x^{(j,0)} := (x_1,\dots,x_{j-1},0,x_{j + 1},\dots,x_d)$ and $\sigma^2(\bb{x}) = F(\bb{x})\bigl\{1-F(\bb{x})\bigr\}$. Note that if $x_j = 0$ for some $j$, then $F_{\bb{m},n}(\bb{x}) = 0$ almost surely, so the bias and variance are trivially zero in this case.
\end{theorem}

\begin{proof}
The proof is given in \ref{app}.
\end{proof}

An immediate implication of Theorem~\ref{thm:boundary-bias} is that the Sz\'{a}sz--Mirakyan estimator retains a second--order bias structure as the evaluation point approaches the boundary of the support. In particular, no first--order boundary bias arises, and the leading bias term remains of order $m_{\min}^{-2}$, reflecting the support--preserving and centered nature of the Poisson smoothing operator. Interpreting the Poisson smoothing parameter through the effective bandwidth scaling $h \asymp m_{\min}^{-1}$, the resulting boundary bias is of order $O(h^2)$. This rate is substantially smaller than the $O(h)$ boundary bias typically exhibited by classical kernel cdf estimators on compact supports in the absence of correction, and it coincides with the $O(h^2)$ boundary bias achieved by boundary kernel cdf estimators such as those studied in~\cite{Zhang2020boundary}. The result highlights the intrinsic boundary adaptivity of the proposed estimator. Note also that the variance term has the same asymptotic order as that of the boundary kernel cdf estimator studied in~\cite{Zhang2020boundary}.

Combining the bias and variance expansions obtained above yields a precise characterization of the mean squared error of the Sz\'{a}sz--Mirakyan estimator in the boundary region. The following corollary summarizes the leading MSE behavior when the evaluation point approaches the boundary at the rate $x_j = \lambda_j/m_j$.

\begin{corollary}\label{cor:boundary-mse}
Fix $L > 0$ and let
\[
\bb{x} = \Big(\frac{\lambda_1}{m_1},\ldots,\frac{\lambda_d}{m_d}\Big), \qquad (\lambda_1,\ldots,\lambda_d)\in[0,L]^d,
\]
with $m_{\min} = \min_{1 \leq j \leq d} m_j$. Suppose Assumption~\ref{ass:szasz-C2-boundary} holds. Then, as $n\to\infty$ and $m_{\min}\to\infty$, the mean squared error satisfies
\begin{align*}
\MSE \, \big(F_{\bb{m},n}(\bb{x})\big)
&= \Var\big(F_{\bb{m},n}(\bb{x})\big) + \Bias\big(F_{\bb{m},n}(\bb{x})\big)^2 \\[0.4em]
&= \frac{1}{n} \, F(\bb{x})\bigl\{1-F(\bb{x})\bigr\} + \frac{1}{4}\Bigg( \sum_{j=1}^d \frac{\lambda_j}{m_j^{2}} \, \partial_{x_jx_j}^2 F\big(\bb{x}^{(j,0)}\big) \Bigg)^{2} + O\big(n^{-1}m_{\min}^{-1}\big) + o\big(m_{\min}^{-4}\big),
\end{align*}
uniformly for $(\lambda_1,\ldots,\lambda_d)\in[0,L]^d$. If $x_j = 0$ for some $j$, then under our absolute continuity assumptions $F_{\bb{m},n}(\bb{x}) = 0$ almost surely, so $\MSE(F_{\bb{m},n}(\bb{x})) = 0$ in this corner case.
\end{corollary}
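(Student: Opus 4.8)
The plan is to combine the two preceding boundary expansions through the elementary decomposition
\[
\MSE\big(F_{\bm m,n}(\bm x)\big)=\Var\big(F_{\bm m,n}(\bm x)\big)+\Bias\big(F_{\bm m,n}(\bm x)\big)^2,
\]
while carefully tracking the orders and the uniformity of all remainder terms over the boundary-layer region $(\lambda_1,\dots,\lambda_d)\in[0,L]^d$.

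First I would substitute the variance expansion from Theorem~\ref{thm:Boundary_variance_cdf}, namely $\Var\big(F_{\bm m,n}(\bm x)\big)=n^{-1}\sigma^2(\bm x)+O\big(n^{-1}m_{\min}^{-1}\big)$, with the remainder uniform in the stated parametrization. Next I would substitute the bias expansion from Theorem~\ref{thm:boundary-bias}, writing $\Bias\big(F_{\bm m,n}(\bm x)\big)=\tfrac12\,B_{\bm m}(\bm x)+o\big(m_{\min}^{-2}\big)$, where I abbreviate $B_{\bm m}(\bm x):=\sum_{j=1}^d \tfrac{\lambda_j}{m_j^{2}}\,\partial_{x_jx_j}^2 F\big(\bm x^{(j,0)}\big)$. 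The key elementary observation is that, since each $\lambda_j\le L$, each $m_j\ge m_{\min}$, and the second-order partial derivatives of $F$ are bounded in a neighborhood of $\partial\mathcal X$ by Assumption~\ref{ass:szasz-C2-boundary}, one has $|B_{\bm m}(\bm x)|\le d\,L\,M\,m_{\min}^{-2}=O\big(m_{\min}^{-2}\big)$, uniformly over $[0,L]^d$.

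Squaring the bias then gives
\[
\Bias\big(F_{\bm m,n}(\bm x)\big)^2=\tfrac14\,B_{\bm m}(\bm x)^2+B_{\bm m}(\bm x)\cdot o\big(m_{\min}^{-2}\big)+o\big(m_{\min}^{-4}\big)=\tfrac14\,B_{\bm m}(\bm x)^2+o\big(m_{\min}^{-4}\big),
\]
since the cross term is $O\big(m_{\min}^{-2}\big)\cdot o\big(m_{\min}^{-2}\big)=o\big(m_{\min}^{-4}\big)$ and $\big(o(m_{\min}^{-2})\big)^2=o\big(m_{\min}^{-4}\big)$. Adding this to the variance expansion yields the claimed formula with composite remainder $O\big(n^{-1}m_{\min}^{-1}\big)+o\big(m_{\min}^{-4}\big)$, and uniformity over $[0,L]^d$ is inherited directly from that of the two input expansions. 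For the degenerate corner $x_j=0$, I would argue from the sample-average representation~\eqref{eq:distribution_est_2}: there $m_j x_j=0$, so $\mathrm{Poi}(m_j x_j)=0$ a.s., while $W_{ij}=\lceil m_j X_{ij}\rceil\ge 1$ a.s.\ because $F$ has a density and hence $X_{ij}>0$ a.s.; thus the $j$-th factor of the product vanishes and $F_{\bm m,n}(\bm x)=0$ a.s. Since the true c.d.f.\ also vanishes on each coordinate hyperplane under absolute continuity, $F(\bm x)=0$, and therefore $\MSE\big(F_{\bm m,n}(\bm x)\big)=0$.

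This argument is essentially bookkeeping. The only point that requires a small amount of care is verifying that the leading bias is genuinely of exact order $m_{\min}^{-2}$ rather than merely $o(1)$: this is what guarantees that the cross term in the squared bias remains $o\big(m_{\min}^{-4}\big)$ and does not contaminate the stated remainder, and it is the step I would flag as the main (if modest) obstacle. Everything else transfers directly from Theorems~\ref{thm:boundary-bias} and~\ref{thm:Boundary_variance_cdf}.
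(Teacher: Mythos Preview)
Your proposal is correct and matches the paper's approach: the paper presents this corollary without a separate proof, treating it as an immediate consequence of combining the bias expansion of Theorem~\ref{thm:boundary-bias} with the variance expansion of Theorem~\ref{thm:Boundary_variance_cdf} via $\MSE=\Var+\Bias^2$, which is precisely the bookkeeping you carry out. Your handling of the cross term in the squared bias (using $|B_{\bm m}(\bm x)|=O(m_{\min}^{-2})$ uniformly, so that the cross term is $o(m_{\min}^{-4})$) and of the degenerate corner case are both sound.
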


Note that even when the smoothing parameters are chosen isotropically, $m_1 = \cdots = m_d\equiv m$, there is no asymptotically optimal choice of $m$ for minimizing the mean squared error in this boundary regime. Indeed, the leading variance term remains $n^{-1}F(x)\{1-F(x)\}$ and is independent of $m$, while the leading bias contribution is of strictly smaller order, namely $O(m^{-4})$. As a result, the usual bias--variance tradeoff collapses near the boundary: increasing $m$ does not reduce the dominant variance term but only suppresses higher--order remainder terms. Consequently, the asymptotic MSE is minimized by taking $m$ as large as allowed within the asymptotic framework, and no finite optimal smoothing parameter emerges in this regime.

This contrasts sharply with the interior region studied in~\hyperref[sec:interior]{Section~\ref{sec:interior}}, where the MSE admits a nontrivial bias--variance trade--off. There, Poisson smoothing produces a variance reduction of order $n^{-1}m^{-1/2}$ while the smoothing bias contributes a term of order $m^{-2}$, leading to an interior-optimal rate $m_{\mathrm{opt}}\asymp n^{2/3}$. Near the boundary, the effective smoothing scale becomes $x_j = \lambda_j/m_j\to 0$, and the variance reduction term that drives the interior efficiency gains vanishes from the leading expansion. As a result, the boundary behavior is dominated by the baseline binomial variance term, and smoothing primarily affects only higher--order corrections rather than inducing an interior-type phase transition.

\section{Simulation study}\label{sec:simulations}

This section reports a Monte Carlo study comparing the empirical cdf $F_n$ and the multivariate Sz\'{a}sz--Mirakyan estimator $F_{\bb{m},n}$, defined in Equations~\eqref{eq:distribution_est} and~\eqref{eq:ecdf}, respectively. The primary goal is to illustrate, on compact interior regions, the integrated risk behavior predicted by the bias--variance and $\IMSE$ expansions derived in Section~\ref{sec:interior}. In particular, Corollary~\ref{thm:szasz-mise} shows that, on compact subsets of the interior and under critical isotropic smoothing levels \(m\asymp n^{2/3}\), the Szász--Mirakyan smoothing yields a second-order reduction in integrated mean squared error relative to the empirical cdf. We also report boundary-layer experiments to illustrate how the finite-sample behavior changes when the evaluation region approaches the boundary of the nonnegative orthant.

\subsection{Models, evaluation region, and performance criteria}\label{sec:sim_model}

Fix a small constant $\delta > 0$ and define the compact interior region
\[
S_{\delta} := [\delta,\delta^{-1})^d \subseteq (0,\infty)^d, \qquad |S_{\delta}| = (\delta^{-1}-\delta)^d.
\]
Except for the boundary-layer comparison reported in Figure~\ref{fig:boundary}, the integrated criteria below are computed over \(S_\delta\), which is bounded away from the boundary \(\partial[0,\infty)^d\) and hence falls under the interior asymptotic regime of Section~2. For Figure~\ref{fig:boundary}, we additionally compute the same criterion over the boundary-layer region \(B_n\) defined in Section~4.3. Throughout, $\delta = 0.05$. The sample sizes are
\[
n\in\{25,50,100,200,400\}.
\]

\paragraph{Data generating mechanisms.} We consider two representative multivariate distributions on $[0,\infty)^d$ with smooth cdf on $(0,\infty)^d$. For simplicity, we only report results for $d = 2$.
\begin{enumerate}[(M1)]
\item \textit{(Independent Gamma, isotropic scale.)} Let $X_1,\ldots,X_d$ be independent with
\[
X_j\sim \mathrm{Gamma}(\alpha,\beta), \qquad (\alpha,\beta) = (2,1), \qquad j = 1,\ldots,d,
\]
so that $F(\bb{x}) = \prod_{j=1}^d F_{\Gamma}(x_j;\alpha,\beta)$ is available in closed form via the regularized lower incomplete gamma function.

\item \textit{(Dependent Clayton copula with Gamma marginals.)} Let $(U_1,\ldots,U_d)$ follow a Clayton copula with parameter $\theta > 0$,
\[
C_{\theta}(\bb{u}) = \Bigg(\sum_{j=1}^d u_j^{-\theta}-d + 1\Bigg)^{-1/\theta}, \qquad \bb{u}\in(0,1]^d,
\]
and set $X_j = F_{\Gamma}^{-1}(U_j;\alpha,\beta)$ with $(\alpha,\beta) = (2,1)$. Then
\[
F(\bb{x}) = C_{\theta}\Big(F_{\Gamma}(x_1;\alpha,\beta),\ldots,F_{\Gamma}(x_d;\alpha,\beta)\Big), \qquad \bb{x}\in[0,\infty)^d,
\]
with $\theta = 2$ in our implementation.
\end{enumerate}

\paragraph{Estimators compared.} For each Monte Carlo replication, we compute:
\begin{enumerate}[(i)]
\item the empirical cdf $F_n(\bb{x})$;
\item the Sz\'{a}sz--Mirakyan estimator $F_{\bb{m}^{\star},n}(\bb{x})$ with $\bb{m}^{\star}$ selected by least-squares cross-validation (Section~\ref{subsec:sim-lscv} below).
\end{enumerate}

\paragraph{Integrated squared error and Monte Carlo risk.} For any cdf estimator $\widehat F$, define the integrated squared error (ISE) over $S_{\delta}$ by
\begin{equation}\label{eq:sim-ise}
\mathrm{ISE}_{S_{\delta}}(\widehat F) := \int_{S_{\delta}}\big\{\widehat F(\bb{x})-F(\bb{x})\big\}^2 \, \mathrm{d} \bb{x}.
\end{equation}
The corresponding integrated mean squared error (IMSE) is $\IMSE_{S_{\delta}}(\widehat F) := \EE[\mathrm{ISE}_{S_{\delta}}(\widehat F)]$. We approximate $\mathrm{ISE}_{S_{\delta}}(\widehat F)$ numerically using a quasi--Monte Carlo (QMC) rule. Specifically, let $\{\bb{u}_g\}_{g = 1}^G\subseteq [0,1]^d$ be a low-discrepancy point set and map it to $S_{\delta}$ via
\[
\bb{x}_g = \delta \, \bb{1}_d + (\delta^{-1}-\delta) \, \bb{u}_g, \qquad g = 1,\ldots,G,
\]
so that
\[
\mathrm{ISE}_{S_{\delta}}(\widehat F) \approx \frac{|S_{\delta}|}{G}\sum_{g = 1}^G\big\{\widehat F(\bb{x}_g)-F(\bb{x}_g)\big\}^2.
\]
In our implementation we take $G = 2^{12} = 4096$ points for $d = 2$, which yields stable numerical integration error at the scales reported in the tables below.

\paragraph{Monte Carlo protocol.} For each $(n,\text{model})$ pair, we run $N_{\mathrm{MC}}$ independent replications (with $N_{\mathrm{MC}} = 100$ in our implementation) and summarize the distribution of $\mathrm{ISE}_{S_{\delta}}$ by its mean, median, interquartile range, and variance. We also report the distribution of the LSCV-selected smoothing level $\bb{m}^{\star}$.

\subsection{Smoothing parameter selection via least-squares cross-validation}\label{subsec:sim-lscv}

The smoothing vector $\bb{m} = (m_1,\ldots,m_d)\in\NN^d$ is selected by minimizing a least-squares cross-validation criterion that targets $\IMSE_{S_{\delta}}$. Write $\widehat F_{\bb{m}} := F_{\bb{m},n}$ for brevity. Expanding the integrated squared error in Equation~\eqref{eq:sim-ise} yields
\[
\mathrm{ISE}_{S_{\delta}}(\widehat F_{\bb{m}}) = \int_{S_{\delta}}\widehat F_{\bb{m}}(\bb{x})^2 \, \mathrm{d} \bb{x} - 2\int_{S_{\delta}}\widehat F_{\bb{m}}(\bb{x}) \, F(\bb{x}) \, \mathrm{d} \bb{x} + \int_{S_{\delta}}F(\bb{x})^2 \, \mathrm{d} \bb{x}.
\]
The last term does not depend on $\bb{m}$, so we drop it. Using $F(\bb{x}) = \EE[\II\!\{\bb{X} \leq \bb{x}\}]$ and Fubini's theorem, we obtain the identity
\[
\int_{S_{\delta}}\widehat F_{\bb{m}}(\bb{x}) \, F(\bb{x}) \, \mathrm{d} \bb{x} = \EE\Bigg[\int_{S_{\delta}}\widehat F_{\bb{m}}(\bb{x}) \, \II\!\{\bb{X} \leq \bb{x}\} \, \mathrm{d} \bb{x}\Bigg].
\]
This motivates the leave-one-out estimator and the following LSCV criterion (up to an additive constant independent of $\bb{m}$):
\begin{equation}\label{eq:lscv-sm}
\mathrm{LSCV}(\bb{m}) = \int_{S_{\delta}}\widehat F_{\bb{m}}(\bb{x})^2 \, \mathrm{d} \bb{x} - \frac{2}{n}\sum_{i=1}^n \int_{S_{\delta}}\widehat F_{\bb{m}}^{(-i)}(\bb{x}) \, \II\!\{\bb{X}_i \leq \bb{x}\} \, \mathrm{d} \bb{x},
\end{equation}
where $\widehat F_{\bb{m}}^{(-i)}$ is the Sz\'{a}sz--Mirakyan estimator computed from the sample with the $i$th observation removed.

\paragraph{Fast leave-one-out evaluation.} Using the sample-average representation~\eqref{eq:distribution_est_2}, define
\[
\psi_{i,\bb{m}}(\bb{x}) := \prod_{j=1}^d \PP\big(\mathrm{Poi}(m_j x_j) \geq W_{ij}\big), \qquad W_{ij} = \lceil m_j X_{ij}\rceil.
\]
Then $\widehat F_{\bb{m}}(\bb{x}) = n^{-1}\sum_{i=1}^n \psi_{i,\bb{m}}(\bb{x})$ and
\[
\widehat F_{\bb{m}}^{(-i)}(\bb{x}) = \frac{1}{n-1}\sum_{\ell \ne i}\psi_{\ell,\bb{m}}(\bb{x}) = \frac{n}{n-1}\widehat F_{\bb{m}}(\bb{x}) - \frac{1}{n-1}\psi_{i,\bb{m}}(\bb{x}).
\]
This identity avoids recomputing the estimator from scratch for each leave-one-out sample and makes the numerical minimization of Equation~\eqref{eq:lscv-sm} feasible.

\paragraph{Numerical implementation of LSCV.} We evaluate both integrals in Equation~\eqref{eq:lscv-sm} on the same QMC grid $\{\bb{x}_g\}_{g = 1}^G$ used for $\mathrm{ISE}_{S_{\delta}}$, yielding
\begin{equation}\label{eq:lscv-sm-qmc}
\widehat{\mathrm{LSCV}}(\bb{m}) = \frac{|S_{\delta}|}{G}\sum_{g = 1}^G \widehat F_{\bb{m}}(\bb{x}_g)^2 - \frac{2}{n}\sum_{i=1}^n \frac{|S_{\delta}|}{G}\sum_{g = 1}^G \widehat F_{\bb{m}}^{(-i)}(\bb{x}_g) \, \II\!\{\bb{X}_i \leq \bb{x}_g\}.
\end{equation}
The selected smoothing vector is
\[
\bb{m}^{\star} \in \operatorname{argmin}_{\bb{m}\in \mathcal M_n}\widehat{\mathrm{LSCV}}(\bb{m}),
\]
where $\mathcal M_n$ is a finite search set described below.

\paragraph{Search domain.} Since Corollary~\ref{thm:szasz-mise} suggests that the IMSE--optimal smoothing levels are of order $n^{2/3}$ in the interior regime (under isotropic smoothing), we restrict the search to the data-dependent range
\[
m_j \in \{m_{\min},m_{\min} + 1,\ldots,m_{\max}(n)\}, \qquad m_{\max}(n) := \min\big(\lfloor c \, n^{2/3}\rfloor,\ m_{\mathrm{cap}},\ n\big),
\]
with default constants $(m_{\min},m_{\mathrm{cap}},c) = (5,500,3)$. To keep the multivariate optimization stable, we minimize Equation~\eqref{eq:lscv-sm-qmc} over $\mathcal M_n$ using a short coordinate-descent scheme initialized at the isotropic pilot $\bb{m}^{(0)} = m^{(0)}\bb{1}_d$, where $m^{(0)}\in\operatorname{argmin}_{m\in\{m_{\min},\ldots,m_{\max}(n)\}}\widehat{\mathrm{LSCV}}(m\bb{1}_d)$. We then perform two full passes over coordinates $j = 1,\ldots,d$, updating $m_j$ by one-dimensional minimization while holding the other coordinates fixed, and set $\bb{m}^{\star}$ to the final iterate.

\bigskip
\begin{algorithm}[H]
\DontPrintSemicolon
\caption{Coordinate-descent LSCV selection of $\bb{m}$}\label{alg:lscv}
\KwIn{Sample $\{\bb{X}_i\}_{i=1}^n$; QMC grid $\{\bb{x}_g\}_{g = 1}^G\subseteq S_{\delta}$; search range $\{m_{\min},\ldots,m_{\max}(n)\}$}
\KwOut{Selected smoothing vector $\bb{m}^{\star}$}
Compute $m^{(0)}\in \operatorname{argmin}_{m}\widehat{\mathrm{LSCV}}(m\bb{1}_d)$ and set $\bb{m}^{(0)} = m^{(0)}\bb{1}_d$
\For{$t = 1,2$}{
\For{$j = 1,\ldots,d$}{
Update $m_j^{(t)}\in\operatorname{argmin}_{m\in\{m_{\min},\ldots,m_{\max}(n)\}} \widehat{\mathrm{LSCV}}(m_1^{(t)},\ldots,m_{j-1}^{(t)},m,m_{j + 1}^{(t-1)},\ldots,m_d^{(t-1)})$
}
}
Set $\bb{m}^{\star} = \bb{m}^{(2)}$
\end{algorithm}

\begin{remark}[On adaptive selection of the smoothing parameters]
The theoretical results in Sections~\ref{sec:interior}--\ref{sec:boundary} are stated for deterministic smoothing vectors $\bb{m}$, whereas our simulation study uses the data-driven LSCV selector $\bb{m}^{\star}$ described above. A rigorous asymptotic theory for such adaptive selectors is possible in principle, but would require additional arguments that are distinct from the bias--variance expansions developed here. In the interior regime, the natural route would be to restrict $\bb{m}$ to a finite grid $\mathcal M_n\subseteq \mathbb N^d$, establish a uniform stochastic approximation of the LSCV criterion to the corresponding integrated risk over $\mathcal M_n$, and then prove an oracle-type inequality showing that the selected estimator attains the risk of the best deterministic choice in the grid, up to a negligible remainder. Such a result would be expected to select smoothing levels of the same order as the theoretically optimal ones, namely $m_j\asymp n^{2/3}$ in the interior setting considered here, possibly with coordinate-dependent constants in the anisotropic case. Proving adaptive second-order outperformance of the empirical cdf is more delicate, however, because the improvement identified in Corollary~\ref{thm:szasz-mise} occurs at the smaller order $n^{-4/3}$ on top of the common leading $n^{-1}$ term. Near the boundary, the boundary-layer analysis shows that the leading variance-reduction mechanism disappears, so one should not expect an adaptive smoothing rule to recover the same efficiency gain as in the interior. A full oracle theory for the LSCV-selected multivariate Sz\'asz--Mirakyan estimator is therefore an interesting direction for future work, but lies beyond the scope of the present paper.
\end{remark}

\subsection{Results and discussion}

Tables~\ref{tab:sim-model1}--\ref{tab:sim-model2} summarize the Monte Carlo distribution of $\mathrm{ISE}_{S_{\delta}}$ for the empirical cdf and the LSCV-selected Sz\'{a}sz--Mirakyan estimator, across sample sizes $n\in\{25,50,100,200,400\}$. In addition, Table~\ref{tab:sim-mstar} reports the empirical scaling of the selected smoothing levels $\bb{m}^{\star}$. Figure~\ref{fig:sim-ise-loglog} provides a visual comparison of the mean ISE across $n$ on a log--log scale.

\paragraph{Expected asymptotic patterns.} Because $S_{\delta}$ is bounded away from the boundary, Corollary~\ref{thm:szasz-mise} predicts that:
\begin{enumerate}[(a)]
\item the dominant term of $\IMSE_{S_{\delta}}$ is of order $n^{-1}$ for both estimators;
\item under critical smoothing $m\asymp n^{2/3}$, the Sz\'{a}sz--Mirakyan estimator exhibits an additional negative correction of order $n^{-4/3}$, implying a second--order IMSE improvement over $F_n$;
\item the LSCV-selected smoothing levels should grow approximately at rate $n^{2/3}$ (up to a model-dependent constant), in agreement with the interior bias--variance balance.
\end{enumerate}
To make these effects visible at finite $n$, we also report the scaled improvement
\[
\Delta_n := n^{4/3}\left\{
\widehat{\textsf{IMSE}}_{D}(F_n)
-
\widehat{\textsf{IMSE}}_{D}(F_{m^\star,n})
\right\},
\qquad D\in\{S_\delta,B_n\},
\]
where $\widehat{\IMSE}_{D}$ denotes the Monte Carlo average of $\mathrm{ISE}_{D}$.

\begin{table}[H]
\centering
\caption{Summary statistics of $\mathrm{ISE}_{S_\delta}$ for model (M1) over $N_{\mathrm{MC}} = 100$ replications.}
\label{tab:sim-model1}
{\setlength{\tabcolsep}{3pt}%
\begin{tabularx}{\linewidth}{@{}r*{9}{>{\centering\arraybackslash}X}@{}}
\toprule
& \multicolumn{2}{c}{Median ISE} & \multicolumn{2}{c}{IQR ISE} & \multicolumn{2}{c}{Mean ISE} & \multicolumn{2}{c}{Variance ISE} & \\
\cmidrule(lr){2-3}\cmidrule(lr){4-5}\cmidrule(lr){6-7}\cmidrule(lr){8-9}
$n$ & $F_n$ & $F_{\bm m^\star,n}$ & $F_n$ & $F_{\bm m^\star,n}$ & $F_n$ & $F_{\bm m^\star,n}$ & $F_n$ & $F_{\bm m^\star,n}$ & $\Delta_n$\\
\midrule
25  & 0.9064 & 0.6528 & 0.5587 & 0.4228 & 1.0558 & 0.8050 & 0.4331 & 0.4615 & 18.3356\\
50  & 0.4470 & 0.3515 & 0.2996 & 0.3086 & 0.5294 & 0.4188 & 0.0883 & 0.0847 & 20.3683\\
100  & 0.2411 & 0.2112 & 0.2287 & 0.2199 & 0.2930 & 0.2559 & 0.0276 & 0.0331 & 17.2015\\
200  & 0.1092 & 0.0884 & 0.0770 & 0.0817 & 0.1228 & 0.1065 & 0.0038 & 0.0049 & 19.0440\\
400  & 0.0573 & 0.0473 & 0.0436 & 0.0505 & 0.0673 & 0.0599 & 0.0013 & 0.0014 & 21.8015\\
\bottomrule
\end{tabularx}
}%
\end{table}

\begin{table}[H]
\centering
\caption{Summary statistics of $\mathrm{ISE}_{S_\delta}$ for model (M2) over $N_{\mathrm{MC}} = 100$ replications.}
\label{tab:sim-model2}
{\setlength{\tabcolsep}{3pt}%
\begin{tabularx}{\linewidth}{@{}r*{9}{>{\centering\arraybackslash}X}@{}}
\toprule
& \multicolumn{2}{c}{Median ISE} & \multicolumn{2}{c}{IQR ISE} & \multicolumn{2}{c}{Mean ISE} & \multicolumn{2}{c}{Variance ISE} & \\
\cmidrule(lr){2-3}\cmidrule(lr){4-5}\cmidrule(lr){6-7}\cmidrule(lr){8-9}
$n$ & $F_n$ & $F_{\bm m^\star,n}$ & $F_n$ & $F_{\bm m^\star,n}$ & $F_n$ & $F_{\bm m^\star,n}$ & $F_n$ & $F_{\bm m^\star,n}$ & $\Delta_n$\\
\midrule
25  & 0.9139 & 0.7414 & 0.8360 & 0.7323 & 1.1851 & 0.9480 & 0.7979 & 0.8591 & 17.3323\\
50  & 0.4026 & 0.2971 & 0.3584 & 0.3328 & 0.5068 & 0.4287 & 0.1067 & 0.1515 & 14.3786\\
100  & 0.2058 & 0.1531 & 0.1587 & 0.1702 & 0.2481 & 0.2045 & 0.0226 & 0.0238 & 20.2655\\
200  & 0.1205 & 0.1031 & 0.0817 & 0.0795 & 0.1389 & 0.1225 & 0.0072 & 0.0081 & 19.2381\\
400  & 0.0557 & 0.0490 & 0.0477 & 0.0372 & 0.0714 & 0.0640 & 0.0020 & 0.0023 & 21.7787\\
\bottomrule
\end{tabularx}
}%
\end{table}

\begin{figure}[H]
\centering
\includegraphics[width=0.78\linewidth]{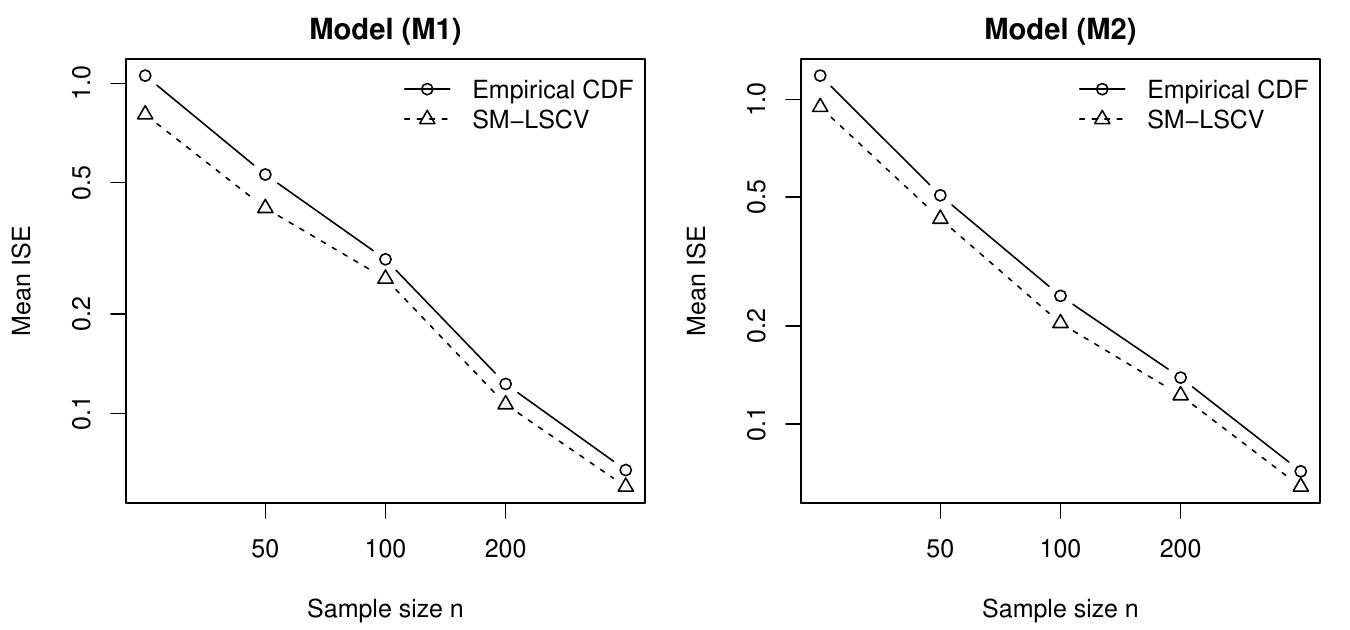}
\caption{Mean $\mathrm{ISE}_{S_{\delta}}$ (Monte Carlo average) versus sample size $n$ on a log--log scale: empirical cdf versus Sz\'{a}sz--Mirakyan with LSCV-selected smoothing.}
\label{fig:sim-ise-loglog}
\end{figure}

\begin{figure}[H]
	\centering
	\includegraphics[width=0.8\linewidth, height=0.4\linewidth]{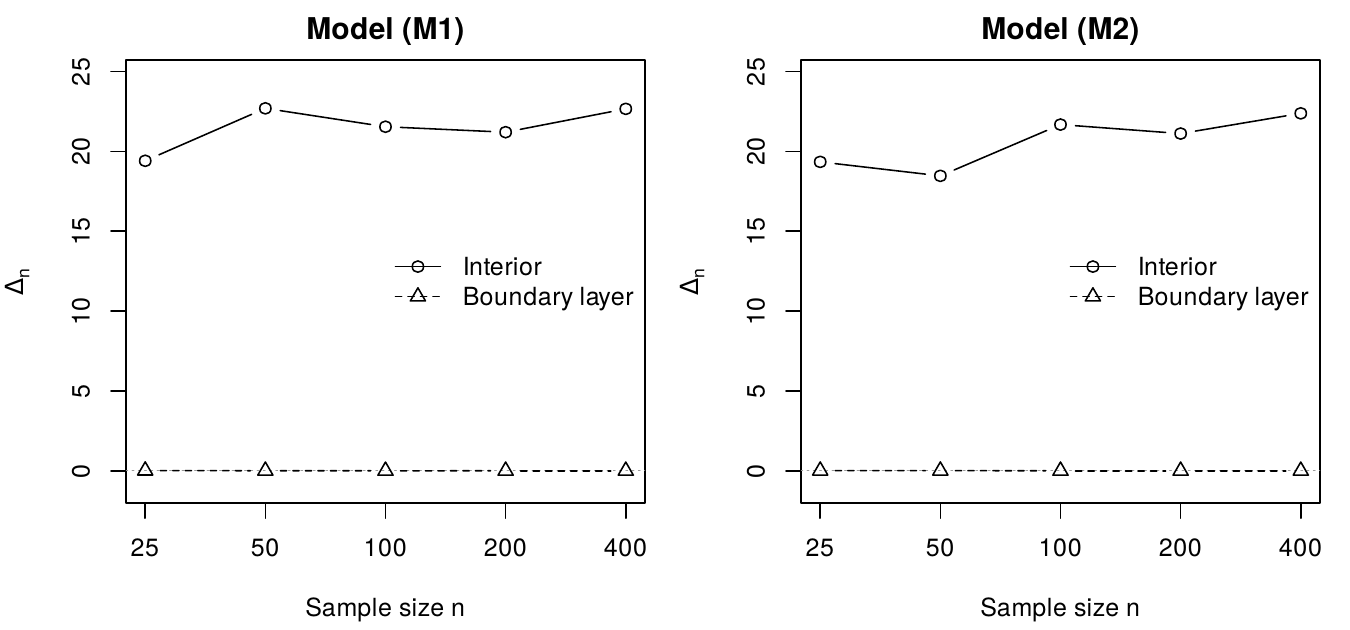}
\caption{Scaled Monte Carlo improvement
	\(\Delta_n\) as a function of the sample size \(n\),
	for models (M1) and (M2). The solid curve corresponds to the compact interior region,
	where the Szász--Mirakyan estimator with LSCV-selected smoothing exhibits a positive
	second-order improvement over the empirical cdf. The dashed curve corresponds to the
	boundary-layer region, where the improvement is negligible, illustrating the disappearance
	of the interior variance-reduction effect near the boundary.}
	\label{fig:boundary}
\end{figure}

\begin{table}[H]
\centering
\caption{LSCV-selected smoothing levels. Reported entries summarize $\bm m^\star$ across Monte Carlo replications.}
\label{tab:sim-mstar}
\begin{tabularx}{\linewidth}{@{}r*{4}{>{\centering\arraybackslash}X}@{}}
\toprule
$n$ & Mean $m^\star_{\min}$ & Mean $m^\star_{\max}$ & Mean $(m^\star_{\min}/n^{2/3})$ & Mean $(m^\star_{\max}/n^{2/3})$\\
\midrule
25  & 5.6100 & 7.9150 & 0.6561 & 0.9257\\
50  & 7.1850 & 10.8800 & 0.5294 & 0.8016\\
100  & 11.1800 & 16.1400 & 0.5189 & 0.7492\\
200  & 17.4900 & 23.6750 & 0.5114 & 0.6923\\
400  & 28.2400 & 34.9450 & 0.5202 & 0.6437\\
\bottomrule
\end{tabularx}
\end{table}

Figure~\ref{fig:boundary} reports the scaled Monte Carlo improvement $\Delta_n$ on the same integrated squared error scale used in Tables~\ref{tab:sim-model1}--\ref{tab:sim-model2}, separately for the compact interior region $S_\delta$ and the boundary-layer region $B_n = \{\bm x \in [0, \delta^{-1}]^2 : \min_{1 \le j \le 2} x_j \le n^{-2/3}\}$, where $\delta = 0.05$ as in Section~\ref{sec:sim_model}. The thickness \(n^{-2/3}\) is motivated by the critical interior smoothing rate \(m\asymp n^{2/3}\) identified in Corollary~\ref{thm:szasz-mise}. Stabilization at a positive constant is the empirical signature of the $n^{-4/3}$ second-order improvement predicted by Corollary~\ref{thm:szasz-mise}: the multiplicative factor $n^{4/3}$ exactly cancels the theoretical rate, so a genuine interior gain manifests as a horizontal asymptote. The boundary-layer curves, by contrast, are visually indistinguishable from the zero reference line at this scale; the actual values lie between approximately $10^{-2}$ at $n = 25$ and $10^{-5}$ at $n = 400$, three to six orders of magnitude below their interior counterparts and trending toward zero as $n$ grows. This collapse is consistent with the boundary-layer behavior described in Corollary~\ref{cor:boundary-mse}: under the boundary-layer scaling $x_j = \lambda_j / m_j$, the variance-reduction term $V(\bm x; \bm m)$ that drives the interior efficiency gain disappears from the leading expansion, so the integrated squared error of $F_{\bm m, n}$ matches that of $F_n$ to leading order and the rescaled difference $\Delta_n$ vanishes asymptotically.

\section{Concluding remarks}\label{sec:conclusion}

This paper has developed a comprehensive asymptotic theory for the multivariate Sz\'{a}sz–Mirakyan cdf estimator on the nonnegative orthant. By deriving sharp bias and variance expansions, we have shown that Poisson smoothing induces a nontrivial variance reduction in the interior of the support, leading to improved mean squared error performance and asymptotic deficiency of the empirical cdf under appropriate smoothing regimes. The asymptotic analysis and simulation identify a critical smoothing rate at which bias and variance effects balance, yielding optimal pointwise and integrated MSE rates comparable to those of classical kernel and Bernstein-type estimators, while naturally respecting the support constraints.

A key contribution of this work is the explicit distinction between interior and boundary behavior. By introducing a boundary-layer scaling that preserves nondegenerate Poisson smoothing as the evaluation point approaches the boundary, we show that the bias-variance tradeoff near the boundary differs fundamentally from that in the interior. In this regime, the variance reduction mechanism responsible for efficiency gains away from the boundary disappears at leading order, implying that no asymptotically optimal smoothing parameter exists locally. Nevertheless, under the boundary-layer scaling, the proposed estimator attains the same boundary bias and variance rates as boundary-corrected kernel cdf estimators. This contrast highlights an intrinsic feature of Poisson-based operators and clarifies the role of smoothing in cdf estimation on unbounded supports.

Future work may focus on establishing the asymptotic properties of the corresponding multivariate Sz\'{a}sz–Mirakyan density and regression estimators. Another important direction is the extension of the proposed framework to more complex data settings, including censored observations, missing data mechanisms, triangular arrays of observations \citep[see, e.g.,][]{MR4335173,MR4597723}, and dependent or longitudinal data structures.

\appendix

\begin{appendices}

\renewcommand{\thesection}{Appendix~\Alph{section}}

\section{Proofs of results}\label{app}

\renewcommand{\thesection}{\Alph{section}}

\subsection{Proof of \texorpdfstring{\hyperref[prop:SM-bias-vector]{Proposition~\ref{prop:SM-bias-vector}}}{Proposition~\ref{prop:SM-bias-vector}}}

Fix a compact set $S\subseteq (0,\infty)^d$ and $\bb{x}\in S$. Since $S\subseteq (0,\infty)^d$ is compact, let $\delta_S > 0$ be small enough such that $\mathcal N_{\delta_S}(S)\subseteq (0,\infty)^d$. Note that
\[
F_{\bb{m}}(\bb{x}) = \sum_{\bb{k}\in\NN_0^d} F\Big(\frac{\bb{k}}{\bb{m}}\Big) P_{\bb{k},\bb{m}}(\bb{x}) = \EE\Big[F\Big(\frac{\bb{K}}{\bb{m}}\Big)\Big],
\]
where $K_1,\ldots,K_d$ are independent with $K_j\sim{\rm Poi}(m_j x_j)$. Define $\Delta_j := K_j/m_j-x_j$ and $\bb{\Delta} := (\Delta_1,\ldots,\Delta_d)^{\top}$. The second–order mean value theorem yields, for each $\bb{k}$ such that $\bb{k}/m\in \mathcal{N}_{\delta_S}(S)$,
\[
F\Big(\frac{\bb{k}}{\bb{m}}\Big)-F(\bb{x}) = \sum_{i=1}^d \Big(\frac{k_i}{m_i}-x_i\Big)\partial_{x_i}F(\bb{x}) + \frac{1}{2}\sum_{i,j = 1}^d \Big(\frac{k_i}{m_i}-x_i\Big)\Big(\frac{k_j}{m_j}-x_j\Big)\partial_{x_i x_j}^2F(\bb{\xi}_{\bb{k}}),
\]
for some $\bb{\xi}_{\bb{k}}$ on the segment joining $\bb{x}$ and $\bb{k}/\bb{m}$. Let $\eta = \eta_{\bb{m}}\in(0,\delta_S]$ be deterministic with
\[
\eta_{\bb{m}}\downarrow 0, \qquad \frac{m_{\min}\eta_{\bb{m}}^2}{\log m_{\min}}\to\infty \qquad (m_{\min}\to\infty),
\]
for instance $\eta_{\bb{m}} = \delta_S\wedge m_{\min}^{-1/4}$ (throughout, $a\wedge b = \min(a,b)$), and set
\[
A := \left\{\Big\|\frac{\bb{K}}{\bb{m}}-\bb{x}\Big\|_{1} \leq \eta\right\},\qquad A^c := \left\{\Big\|\frac{\bb{K}}{\bb{m}}-\bb{x}\Big\|_{1} > \eta\right\}.
\]
Taking expectation with respect to $P_{\bb{k},\bb{m}}(\bb{x})$ gives
\[
\EE\big[\{F(\bb{K}/\bb{m})-F(\bb{x})\}\mathbf \II_A\big] = \sum_{i=1}^d \EE[\Delta_i \II_A] \, \partial_{x_i}F(\bb{x}) + \frac{1}{2} \sum_{i,j = 1}^d \EE\Big[\Delta_i\Delta_j \, \partial_{x_i x_j}^2F(\bb{\xi}_{\bb{K}})\II_A\Big].
\]
We add and subtract the entries of the fixed Hessian $H(\bb{x}) = \{\partial_{x_i x_j}^2F(\bb{x})\}_{i,j}$ to obtain
\[
\begin{aligned}
\EE\big[\{F(\bb{K}/\bb{m})-F(\bb{x})\}\mathbf \II_A\big]
&= \sum_{i=1}^d \EE[\Delta_i \II_A] \, \partial_{x_i}F(\bb{x}) + \frac{1}{2} \sum_{i,j = 1}^d \EE\Big[\Delta_i\Delta_j \, \partial_{x_i x_j}^2F(\bb{x})\II_A\Big] + R_{\bb{m}}^{(A)}(\bb{x}),
\end{aligned}
\]
where
\begin{equation*}
R_{\bb{m}}^{(A)}(\bb{x}) = \frac{1}{2} \sum_{i,j = 1}^d \EE\Big[\Delta_i\Delta_j\big\{\partial_{x_i x_j}^2F(\bb{\xi}_{\bb{K}})-\partial_{x_i x_j}^2F(\bb{x})\big\}\II_A\Big]
\end{equation*}
If the event $A$ occurs, then the entire segment between $\bb{x}$ and $\bb{K}/\bb{m}$ lies in the tube $\mathcal N_{\delta_S}(S)$, hence $\bb{\xi}_{\bb{K}}\in \mathcal N_{\delta_S}(S)$. By Assumption~\ref{ass:C2-vector}, each Hessian entry is uniformly continuous on $\mathcal N_{\delta_S}(S)$; let $\omega_S(\cdot)$ denote a common modulus of continuity there. Then on $A$,
\[
\max_{i,j}\big|\partial_{x_i x_j}^2F(\bb{\xi}_{\bb{K}})-\partial_{x_i x_j}^2F(\bb{x})\big| \leq \omega_S\left(\Big\|\frac{\bb{K}}{\bb{m}}-\bb{x}\Big\|_{1}\right) \leq \omega_S(\eta).
\]
Since $\EE[\Delta_i] = 0$, $\EE[\Delta_i\Delta_j] = 0$ for $i \ne j$, and $\EE[\Delta_j^2] = \Var(K_j/m_j) = x_j/m_j$. Therefore, by the Cauchy--Schwarz inequality and the Poisson moment identities,
\begin{align*}
|R_{\bb{m}}^{(A)}(\bb{x})| &\leq \frac{1}{2} \, \omega_S(\eta)\sum_{i,j = 1}^d \EE|\Delta_i\Delta_j| \leq C \, \omega_S(\eta)\sum_{i,j = 1}^d \big(\EE[\Delta_i^2]\big)^{1/2}\big(\EE[\Delta_j^2]\big)^{1/2} \\
&= C \, \omega_S(\eta)\sum_{i,j = 1}^d \Big(\frac{x_i}{m_i}\Big)^{1/2}\Big(\frac{x_j}{m_j}\Big)^{1/2} = O\big(m_{\min}^{-1}\big) \, \omega_S(\eta),
\end{align*}
uniformly for $\bb{x}\in S$, where $C$ depends only on $S$. Since $\omega_S(\eta_{\bb{m}})\to 0$ as $m_{\min}\to\infty$, it follows that $|R_{\bb{m}}^{(A)}(\bb{x})| = o(m_{\min}^{-1})$ uniformly for $\bb{x} \in S$.

Return to the raw split
\begin{align*}
F_{\bb{m}}(\bb{x})-F(\bb{x})
&= \EE\big[\{F(\bb{K}/\bb{m})-F(\bb{x})\}\mathbf \II_A\big] + \EE\big[\{F(\bb{K}/\bb{m})-F(\bb{x})\}\mathbf \II_{A^c}\big] \\
&= \Gamma_{\mathrm{near}} + \Gamma_{\mathrm{far}}.
\end{align*}
On $A$ we already obtained
\[
\Gamma_{\mathrm{near}} = \EE\big[\nabla F(\bb{x})^{\top} \bb{\Delta} \, \mathbf \II_A\big] + \frac{1}{2} \, \EE\big[\bb{\Delta}^{\top} H(\bb{x})\bb{\Delta} \, \mathbf \II_A\big] + R_{\bb{m}}^{(A)}(\bb{x}), \qquad |R_{\bb{m}}^{(A)}(\bb{x})| \leq C \, \omega_S(\eta) \, m_{\min}^{-1}.
\]
Add and subtract the complementary quadratic piece with the {fixed} Hessian $H(\bb{x})$,
\[
F_{\bb{m}}(\bb{x})-F(\bb{x}) = \EE\big[\nabla F(\bb{x})^{\top} \bb{\Delta} \, \mathbf \II_A\big] + \frac{1}{2} \, \EE\big[\bb{\Delta}^{\top} H(\bb{x})\bb{\Delta}\big] + R_{\bb{m}}^{(A)}(\bb{x}) + T_{\mathrm{far}},
\]
where
\[
T_{\mathrm{far}} = \Gamma_{\mathrm{far}}-\frac{1}{2} \, \EE\big[\bb{\Delta}^{\top} H(\bb{x})\bb{\Delta} \, \mathbf \II_{A^c}\big].
\]
Further, since $F$ is a cdf, $|F(\bb{K}/\bb{m})-F(\bb{x})| \leq 1$, hence
\[
|\Gamma_{\mathrm{far}}| \leq \PP(A^c).
\]
Because $\bb{x}\in S$ and $F$ is twice continuously differentiable, the entries of $H(\bb{x})$ are bounded on $S$ by some $B_S < \infty$. For $K_{\ell}\sim\mathrm{Poi}(\mu_{\ell})$ one has $\EE[(K_{\ell}-\mu_{\ell})^4] = \mu_{\ell} + 3\mu_{\ell}^2$. With $\mu_{\ell} = m_{\ell} x_{\ell}$,
\[
\EE|\Delta_{\ell}|^4 = \frac{\mu_{\ell} + 3\mu_{\ell}^2}{m_{\ell}^4} = \frac{x_{\ell}}{m_{\ell}^3} + \frac{3x_{\ell}^2}{m_{\ell}^2} = O(m_{\ell}^{-2})\qquad \text{uniformly for }\bb{x}\in S,
\]
hence $\EE[|\Delta_{\ell}|^4]^{1/4} = O(m_{\ell}^{-1/2})$. So by H\"older's inequality with exponents $(4,4,2)$,
\[
\Big|\EE\big[\bb{\Delta}^{\top} H(\bb{x})\bb{\Delta} \, \mathbf \II_{A^c}\big]\Big| \leq B_S\sum_{i,j = 1}^d \EE[|\Delta_i|^4]^{1/4}\EE[|\Delta_j|^4]^{1/4} \, \PP(A^c)^{1/2} = O(m_{\min}^{-1}) \, \PP(A^c)^{1/2}.
\]
With $A = \{\|\bb{K}/\bb{m}-\bb{x}\|_{1} \leq \eta\}$, the union bound and the two–sided Poisson tail inequality~\citep[see][]{Canonne2016short}
\[
\PP\{|X-\lambda| \geq t\} \leq 2\exp\left(-\frac{t^2}{2(\lambda + t)}\right), \qquad X\sim\mathrm{Poi}(\lambda),\ t > 0,
\]
give, uniformly for $\bb{x}\in S$ (where $x_j\in[a,b]$ for some $0 < a \leq b < \infty$),
\[
\begin{aligned}
\PP(A^c)
&= \PP\left(\Big\|\frac{\bb{K}}{\bb{m}}-\bb{x}\Big\|_{1} > \eta\right) \\
&\leq \sum_{j=1}^d \PP\left(\Big|\frac{K_j}{m_j}-x_j\Big| > \eta/d\right) \leq 2d \, \exp\left(-c_{\bb{m}} \, m_{\min}\right), \qquad c_{\bb{m}} := \frac{(\eta_{\bb{m}}/d)^{2}}{2(b + \eta_{\bb{m}}/d)} > 0.
\end{aligned}
\]
By the choice of $\eta_{\bb{m}}$,
\[
\frac{c_{\bb{m}}m_{\min}}{\log m_{\min}} \geq \frac{m_{\min}\eta_{\bb{m}}^2}{2d^2(b + \delta_S/d)\log m_{\min}} \to \infty,
\]
and hence $\PP(A^c) = o(m_{\min}^{-q})$ for every fixed $q > 0$. In particular, $\PP(A^c) = o(m_{\min}^{-1})$ and $\PP(A^c)^{1/2} = o(m_{\min}^{-1/2})$. Therefore,
\[
|T_{\mathrm{far}}| \leq \PP(A^c) + O(m_{\min}^{-1})\PP(A^c)^{1/2} \leq o(m_{\min}^{-1}).
\]
Further, note that,
\begin{equation*}
\EE\big[\nabla F(\bb{x})^{\top} \bb{\Delta} \, \II_A\big] = 0- \EE\big[\nabla F(\bb{x})^{\top} \bb{\Delta} \, \II_{A^c}\big].
\end{equation*}
Then, by the Cauchy--Schwarz inequality,
\[
\Big|\EE\big[\nabla F(\bb{x})^{\top} \bb{\Delta} \, \II_{A^c}\big]\Big| \leq \|\nabla F(\bb{x})\|_2 \, \EE\big[\|\bb{\Delta}\|_2 \, \II_{A^c}\big] \leq \|\nabla F(\bb{x})\|_2 \, \left\{\EE\|\bb{\Delta}\|_2^2\right\}^{1/2}\PP(A^c)^{1/2}.
\]
Moreover,
\[
\EE\|\bb{\Delta}\|_2^2 = \sum_{j=1}^d \EE[\Delta_j^2] = \sum_{j=1}^d \frac{x_j}{m_j} \leq \frac{\sum_{j=1}^d x_j}{m_{\min}},
\]
so, using the boundedness of $\nabla F$ on $S$ and $\PP(A^c)^{1/2} = o(m_{\min}^{-1/2})$, $\EE[\nabla F(\bb{x})^{\top} \bb{\Delta} \, \mathbf \II_A] = o(m_{\min}^{-1})$ uniformly for $\bb{x}\in S$. With $|R_{\bb{m}}^{(A)}(\bb{x})| = o(m_{\min}^{-1})$ and $|T_{\mathrm{far}}| = o(m_{\min}^{-1})$, the expansion
\[
F_{\bb{m}}(\bb{x})-F(\bb{x}) = \frac{1}{2}\sum_{j=1}^d \frac{x_j}{m_j} \, \partial_{x_jx_j}^2F(\bb{x}) + o(m_{\min}^{-1})
\]
follows uniformly for $\bb{x}\in S$. \qed

\subsection{Proof of \texorpdfstring{\hyperref[thm:szasz-bias-var]{Theorem~\ref{thm:szasz-bias-var}}}{Theorem~\ref{thm:szasz-bias-var}}}

\emph{Bias part.} By linearity, $\EE\big[F_{\bb{m},n}(\bb{x})\big] = \EE[F(\bb{K}/\bb{m})] = F_{\bb{m}}(\bb{x})$, so the stated bias expansion follows directly from Proposition~\ref{prop:SM-bias-vector}.

\noindent
\emph{Variance part.} Define the centered summands
\[
Z_{i,\bb{m}}(\bb{x}) := \sum_{\bb{k}\in\NN_0^d}\Big( \II\Big\{\bb{X}_i \leq \frac{\bb{k}}{\bb{m}}\Big\}-F\left(\frac{\bb{k}}{\bb{m}}\right)\Big) \, P_{\bb{k},\bb{m}}(\bb{x}), \qquad i = 1,\dots,n.
\]
Then $F_{\bb{m},n}(\bb{x})-F_{\bb{m}}(\bb{x}) = n^{-1}\sum_{i=1}^n Z_{i,\bb{m}}(\bb{x})$, the $Z_{i,\bb{m}}$ are iid with $\EE [Z_{i,\bb{m}}(\bb{x})] = 0$, and hence
\[
\Var\big(F_{\bb{m},n}(\bb{x})\big) = \frac{1}{n} \, \EE\big[Z_{1,\bb{m}}(\bb{x})^2\big].
\]
A direct expansion yields
\begin{equation}\label{eq:VarZ}
\EE\big[Z_{1,\bb{m}}(\bb{x})^2\big] = \sum_{\bb{k},\bb{\ell}\in\NN_0^d} \bigg(F\Big(\frac{\bb{k}\wedge\bb{\ell}}{\bb{m}}\Big)-F_{\bb{m}}(\bb{x})^2\bigg) P_{\bb{k},\bb{m}}(\bb{x})P_{\bb{\ell},\bb{m}}(\bb{x}),
\end{equation}
where $\bb{k}\wedge\bb{\ell} = (k_1\wedge \ell_1,\dots,k_d\wedge \ell_d)^{\top}$. Equivalently, let $\bb{L}$ be an independent copy of $\bb{K}$,
\[
\EE\big[Z_{1,\bb{m}}(\bb{x})^2\big] = \EE\Big[F\Big(\frac{\bb{K}\wedge\bb{L}}{\bb{m}}\Big)\Big]-\Big(\EE\Big[F\Big(\frac{\bb{K}}{\bb{m}}\Big)\Big]\Big)^2.
\]
Write $\bb{U} := (\bb{K}\wedge\bb{L})/\bb{m}-\bb{x}$ and $\bb{V} := \bb{K}/\bb{m}-\bb{x}$. Then,
\begin{equation}\label{eq:key}
\EE\big[Z_{1,\bb{m}}(\bb{x})^2\big] = \EE\big[F(\bb{x} + \bb{U})\big]-\Big(\EE\big[F(\bb{x} + \bb{V})\big]\Big)^2.
\end{equation}
By the same near/far Taylor argument as in Proposition~\ref{prop:SM-bias-vector}, we obtain, uniformly for $\bb{x}\in S$,
\begin{equation}\label{eq:VarExpansion}
\EE\big[Z_{1,\bb{m}}(\bb{x})^2\big] = F(\bb{x})\{1-F(\bb{x})\} + \sum_{i=1}^d \partial_{x_i}F(\bb{x}) \, \EE [U_i] + O(m_{\min}^{-1}),
\end{equation}
where the $O(m_{\min}^{-1})$ collects the quadratic terms and all remainder bounds. To fully justify \eqref{eq:VarExpansion}, we proceed as follows. Fix the compact set $S$ in the statement and choose $\delta_S > 0$ such that $\mathcal N_{\delta_S}(S)\subseteq (0,\infty)^d$. Let $\eta = \eta_{\bb{m}}\in(0,\delta_S]$ satisfy
\[
\eta_{\bb{m}}\downarrow 0, \qquad \frac{m_{\min}\eta_{\bb{m}}^2}{\log m_{\min}}\to\infty \qquad (m_{\min}\to\infty),
\]
for instance $\eta_{\bb{m}} = \delta_S\wedge m_{\min}^{-1/4}$, and define the two near events
\[
A_{\bb{U}} := \{\|\bb{U}\|_{1} \leq \eta\},\qquad A_{\bb{V}} := \{\|\bb{V}\|_{1} \leq \eta\}.
\]
We first record the elementary moment and tail bounds that will be used below. For $\bb{V}$, $\EE[V_i] = 0$, $\EE[V_i^2] = x_i/m_i = O(m_i^{-1})$, and $\EE[|V_i|^4] = O(m_i^{-2})$, uniformly for $\bb{x}\in S$. For $\bb{U}$, since
\[
|U_i| = \frac{|\min(K_i,L_i)-m_i x_i|}{m_i} \leq \frac{|K_i-m_i x_i|+|L_i-m_i x_i|}{m_i},
\]
the same Poisson moment identities imply $\EE[U_i^2] = O(m_i^{-1})$ and $\EE[|U_i|^4] = O(m_i^{-2})$, uniformly for $\bb{x}\in S$. Moreover, if $0 < a \leq x_j \leq b < \infty$ on $S$, then the union bound and the Poisson tail inequality used in the proof of Proposition~\ref{prop:SM-bias-vector} give
\[
\PP(A_{\bb{V}}^c) \leq 2d \, \exp\left(-c_{\bb{m}}m_{\min}\right),\qquad c_{\bb{m}} := \frac{(\eta_{\bb{m}}/d)^2}{2(b+\eta_{\bb{m}}/d)}.
\]
For $\bb{U}$, the inclusion
\[
\{|U_j|>\eta/d\}\subseteq \left\{\Big|\frac{K_j}{m_j}-x_j\Big|>\eta/d\right\}\cup\left\{\Big|\frac{L_j}{m_j}-x_j\Big|>\eta/d\right\}
\]
yields the same bound, up to a change of the multiplicative constant. Hence, for every fixed $q > 0$,
\[
\PP(A_{\bb{U}}^c)+\PP(A_{\bb{V}}^c) = o(m_{\min}^{-q})
\]
uniformly for $\bb{x}\in S$. In particular, $\PP(A_{\bb{U}}^c)+\PP(A_{\bb{V}}^c) = o(m_{\min}^{-1})$ and $\PP(A_{\bb{U}}^c)^{1/2}+\PP(A_{\bb{V}}^c)^{1/2} = o(m_{\min}^{-1/2})$.

Let $\bb{W}$ denote either $\bb{U}$ or $\bb{V}$, and let $A_{\bb{W}}$ denote the corresponding near event. On $A_{\bb{W}}$, the point $\bb{x}+\bb{W}$ belongs to $\mathcal N_{\delta_S}(S)$. By a second-order Taylor expansion around $\bb{x}$,
\[
F(\bb{x}+\bb{W})-F(\bb{x}) = \sum_{i=1}^d \partial_{x_i}F(\bb{x})W_i + \frac{1}{2}\sum_{i,j=1}^d \partial_{x_i x_j}^2F(\bb{x})W_iW_j + \rho_{\bb{W}},
\]
where, if $\omega_S(\cdot)$ denotes a common modulus of continuity for the entries of the Hessian of $F$ on $\mathcal N_{\delta_S}(S)$,
\[
|\rho_{\bb{W}}| \leq \frac{1}{2} \, \omega_S(\eta)\sum_{i,j=1}^d |W_iW_j| \qquad \text{on } A_{\bb{W}}.
\]
Taking expectations, adding and subtracting the first-order term on $A_{\bb{W}}^c$, and using that $F$ is bounded by one, we obtain
\[
\EE[F(\bb{x}+\bb{W})] = F(\bb{x}) + \sum_{i=1}^d \partial_{x_i}F(\bb{x}) \, \EE[W_i] + R_{\bb{m}}^{(\bb{W})}(\bb{x}),
\]
with
\[
\begin{aligned}
|R_{\bb{m}}^{(\bb{W})}(\bb{x})|
&\leq C\sum_{i=1}^d \EE[|W_i|\II_{A_{\bb{W}}^c}] + C\sum_{i,j=1}^d \EE[|W_iW_j|] + C\omega_S(\eta)\sum_{i,j=1}^d \EE[|W_iW_j|] + \PP(A_{\bb{W}}^c) \\
&= O(m_{\min}^{-1}),
\end{aligned}
\]
uniformly for $\bb{x}\in S$. Indeed, the quadratic terms are $O(m_{\min}^{-1})$ by the moment bounds above, the Taylor remainder is $o(m_{\min}^{-1})$ because $\omega_S(\eta)\to 0$, and the far-event first-order terms satisfy $\EE[|W_i|\II_{A_{\bb{W}}^c}] \leq \EE[W_i^2]^{1/2} \, \PP(A_{\bb{W}}^c)^{1/2} = o(m_{\min}^{-1})$.

Applying this expansion first with $\bb{W}=\bb{U}$ and then with $\bb{W}=\bb{V}$ gives
\[
\EE[F(\bb{x}+\bb{U})] = F(\bb{x}) + \sum_{i=1}^d \partial_{x_i}F(\bb{x}) \, \EE[U_i] + O(m_{\min}^{-1}),
\]
and, since $\EE[V_i] = 0$ for every $i$,
\[
\EE[F(\bb{x}+\bb{V})] = F(\bb{x}) + O(m_{\min}^{-1}).
\]
Consequently,
\[
\Big(\EE[F(\bb{x}+\bb{V})]\Big)^2 = F(\bb{x})^2 + O(m_{\min}^{-1}),
\]
uniformly for $\bb{x}\in S$. Combining the last three displays with \eqref{eq:key} yields \eqref{eq:VarExpansion}, uniformly for $\bb{x}\in S$.

It remains to evaluate $\EE [U_i]$ to first order. The identity $\min(a,b) = \frac{1}{2} \, (a + b-|a-b|)$ gives
\[
\EE [U_i] = \frac{1}{m_i}\Big(\EE[\min(K_i,L_i)]-m_i x_i\Big) = -\frac{1}{2m_i} \, \EE|K_i-L_i|.
\]
Since $K_i-L_i$ has the centered Skellam distribution with variance $2m_i x_i$, its mean absolute value satisfies the classical normal-approximation asymptotic~\citep[see for example][Chapter 5]{BhattacharyaRao2010normalbook}
\[
\EE|K_i-L_i| = \sqrt{\frac{2}{\pi}} \sqrt{2 m_i x_i} \, + \, O\big(m_i^{-1/2}\big), \qquad m_i\to\infty,
\]
uniformly for $x_i$ in compact subsets of $(0,\infty)$. Consequently,
\begin{equation}\label{eq:Ui-expansion}
\EE [U_i] = - \, m_i^{-1/2} \, \sqrt{\frac{x_i}{\pi}} + O(m_i^{-3/2}), \qquad 1 \leq i \leq d,
\end{equation}
uniformly for $\bb{x}\in S$. Insert Equation~\eqref{eq:Ui-expansion} into Equation~\eqref{eq:VarExpansion} to obtain
\[
\EE\big[Z_{1,\bb{m}}(\bb{x})^2\big] = \sigma^2(\bb{x})-\sum_{i=1}^d m_i^{-1/2} \, \partial_{x_i}F(\bb{x}) \, \sqrt{\frac{x_i}{\pi}} + O(m_{\min}^{-1}),
\]
uniformly for $\bb{x}\in S$. Therefore
\[
\Var\big(F_{\bb{m},n}(\bb{x})\big) = \frac{1}{n} \, \sigma^2(\bb{x}) -\frac{1}{n} \, \sum_{i=1}^d m_i^{-1/2} \, \partial_{x_i}F(\bb{x}) \, \sqrt{\frac{x_i}{\pi}} + O\big(n^{-1}m_{\min}^{-1}\big),
\]
uniformly for $\bb{x}\in S$, which is the claimed variance expansion. \qed

\subsection{Proof of \texorpdfstring{\hyperref[thm:deficiency-sm]{Theorem~\ref{thm:deficiency-sm}}}{Theorem~\ref{thm:deficiency-sm}}}

We prove the local result first. The global result follows from the same reciprocal expansion applied to the integrated MSE expansion over $S$. Throughout this proof, $o_{\bb{x}}(\cdot)$ and $O_{\bb{x}}(\cdot)$ denote deterministic orders whose implied constants may depend on $\bb{x}$. Since $0 < F(\bb{x}) < 1$, we have $\sigma^2(\bb{x})>0$. For the empirical cdf,
\begin{equation}\label{eq:edf-mse-thmdef}
\MSE\big(F_k(\bb{x})\big) = \frac{\sigma^2(\bb{x})}{k}, \qquad k\in\NN.
\end{equation}
By Corollary~\ref{cor:szasz-mse} and Theorem~\ref{thm:szasz-bias-var},
\begin{equation}\label{eq:sm-mse-thmdef}
\MSE\big(F_{\bb{m},n}(\bb{x})\big) = \frac{\sigma^2(\bb{x})}{n} -\frac{V(\bb{x})}{n \, m^{1/2}} + \frac{B^2(\bb{x})}{m^2} + r_{n,\bb{x}},
\end{equation}
where
\begin{equation}\label{eq:rn-order-thmdef}
r_{n,\bb{x}} = O_{\bb{x}}\big(n^{-1}m^{-1}\big) + o_{\bb{x}}\big(m^{-2}\big).
\end{equation}
Thus the direct comparison with the empirical cdf is
\[
\MSE\big(F_{\bb{m},n}(\bb{x})\big)-\MSE\big(F_n(\bb{x})\big) = -\frac{V(\bb{x})}{n \, m^{1/2}} + \frac{B^2(\bb{x})}{m^2} + r_{n,\bb{x}}.
\]
By \eqref{eq:edf-mse-thmdef}, the definition of $L^{\mathrm{SM}}(n,\bb{x})$ is equivalently
\begin{equation}\label{eq:ceil-thmdef}
L^{\mathrm{SM}}(n,\bb{x}) = \left\lceil \frac{\sigma^2(\bb{x})}{\MSE\big(F_{\bb{m},n}(\bb{x})\big)} \right\rceil.
\end{equation}
Write
\[
\MSE\big(F_{\bb{m},n}(\bb{x})\big) = \frac{\sigma^2(\bb{x})}{n}\{1+a_{n,\bb{x}}\},
\]
where
\[
a_{n,\bb{x}} = -\frac{V(\bb{x})}{\sigma^2(\bb{x})} \, m^{-1/2} + \frac{B^2(\bb{x})}{\sigma^2(\bb{x})} \, \frac{n}{m^2} + \frac{n r_{n,\bb{x}}}{\sigma^2(\bb{x})}.
\]
If $nm^{-2}\to0$, then $a_{n,\bb{x}}=o_{\bb{x}}(1)$, and therefore \eqref{eq:ceil-thmdef} gives
\[
L^{\mathrm{SM}}(n,\bb{x}) = \left\lceil n(1+a_{n,\bb{x}})^{-1}\right\rceil = n\{1+o_{\bb{x}}(1)\}.
\]
Moreover,
\[
L^{\mathrm{SM}}(n,\bb{x})-n = n\{(1+a_{n,\bb{x}})^{-1}-1\}+O(1) = -n a_{n,\bb{x}}+O_{\bb{x}}(n a_{n,\bb{x}}^2)+O(1).
\]
Using \eqref{eq:rn-order-thmdef},
\[
n^2 r_{n,\bb{x}} = O_{\bb{x}}\Big(\frac{n}{m}\Big) + o_{\bb{x}}\Big(\frac{n^2}{m^2}\Big) = o_{\bb{x}}\Big(\frac{n}{m^{1/2}}\Big) + o_{\bb{x}}\Big(\frac{n^2}{m^2}\Big).
\]
Also, by \eqref{eq:rn-order-thmdef},
\[
a_{n,\bb{x}} = O_{\bb{x}}\big(m^{-1/2}\big) + O_{\bb{x}}\big(nm^{-2}\big) + O_{\bb{x}}\big(m^{-1}\big) + o_{\bb{x}}\big(nm^{-2}\big) = O_{\bb{x}}\big(m^{-1/2}+nm^{-2}\big),
\]
because $m^{-1}=o(m^{-1/2})$ and $nm^{-2}\to0$. Hence
\[
n a_{n,\bb{x}}^2 = O_{\bb{x}}\Big(\frac{n}{m}\Big) + O_{\bb{x}}\Big(\frac{n^2}{m^{5/2}}\Big) + O_{\bb{x}}\Big(\frac{n^3}{m^4}\Big) = o_{\bb{x}}\Big(\frac{n}{m^{1/2}}+\frac{n^2}{m^2}\Big).
\]
Consequently,
\begin{equation}\label{eq:delta-master-thmdef}
L^{\mathrm{SM}}(n,\bb{x})-n = \frac{V(\bb{x})}{\sigma^2(\bb{x})} \, \frac{n}{m^{1/2}} -\frac{B^2(\bb{x})}{\sigma^2(\bb{x})} \, \frac{n^2}{m^2} + o_{\bb{x}}\Big(\frac{n}{m^{1/2}}+\frac{n^2}{m^2}\Big) + O(1).
\end{equation}
If $m n^{-2/3}\to\infty$ and $m n^{-2}\to0$, then $n/m^{1/2}\to\infty$ and $n^2/m^2=o(n/m^{1/2})$. Hence \eqref{eq:delta-master-thmdef} yields
\[
L^{\mathrm{SM}}(n,\bb{x})-n = \frac{n}{m^{1/2}}\left\{\frac{V(\bb{x})}{\sigma^2(\bb{x})}+o_{\bb{x}}(1)\right\}.
\]
If $m n^{-2/3}\to c\in(0,\infty)$, then
\[
\frac{n}{m^{1/2}} = c^{-1/2}n^{2/3}\{1+o(1)\}, \qquad \frac{n^2}{m^2} = c^{-2}n^{2/3}\{1+o(1)\},
\]
and \eqref{eq:delta-master-thmdef} gives
\[
L^{\mathrm{SM}}(n,\bb{x})-n = n^{2/3} \left\{ c^{-1/2}\frac{V(\bb{x})}{\sigma^2(\bb{x})} - c^{-2}\frac{B^2(\bb{x})}{\sigma^2(\bb{x})} + o_{\bb{x}}(1) \right\}.
\]

For the global result, let
\[
\Sigma_S := \int_S\sigma^2(\bb{x}) \, \mathrm{d}\bb{x}, \qquad \mathcal V_S := \int_S V(\bb{x}) \, \mathrm{d}\bb{x}, \qquad \mathcal B_S := \int_S B^2(\bb{x}) \, \mathrm{d}\bb{x}.
\]
Since $\Sigma_S>0$,
\[
\IMSE_S(F_k) = \frac{\Sigma_S}{k}, \qquad k\in\NN,
\]
and Corollary~\ref{thm:szasz-mise}, together with Theorem~\ref{thm:szasz-bias-var}, gives
\[
\IMSE_S(F_{\bb{m},n}) = \frac{\Sigma_S}{n} -\frac{\mathcal V_S}{n \, m^{1/2}} + \frac{\mathcal B_S}{m^2} + O\big(n^{-1}m^{-1}\big) + o\big(m^{-2}\big).
\]
Repeating the preceding reciprocal expansion with $\sigma^2(\bb{x})$, $V(\bb{x})$, and $B^2(\bb{x})$ replaced by $\Sigma_S$, $\mathcal V_S$, and $\mathcal B_S$ proves
\[
G^{\mathrm{SM}}_S(n) = n\{1+o(1)\}
\]
whenever $nm^{-2}\to0$, and gives the two displayed global expansions in \eqref{eq:def-sm-regime-a} and \eqref{eq:def-sm-regime-b}. The divergence conclusions follow immediately from the corresponding positivity conditions stated in the theorem. \qed

\subsection{Proof of \texorpdfstring{\hyperref[thm:sm-cdf-clt-interior]{Theorem~\ref{thm:sm-cdf-clt-interior}}}{Theorem~\ref{thm:sm-cdf-clt-interior}}}

Fix $\bb{x}\in (0,\infty)^d$ such that $0 < F(\bb{x}) < 1$. For each $n$ and multi-index $\bb{m} = \bb{m}(n)$, define
\[
Y_{i,\bb{m}}(\bb{x}) := \prod_{j=1}^d \PP\left(\mathrm{Poi}(m_j x_j) \geq W_{ij}\right), \qquad 1 \leq i \leq n,
\]
where $W_{ij} = \lceil m_j X_{ij}\rceil$, so that
\[
F_{\bb{m},n}(\bb{x}) = \frac{1}{n}\sum_{i=1}^n Y_{i,\bb{m}}(\bb{x}).
\]
By construction, for each fixed $(n,\bb{m})$, the random variables $Y_{1,\bb{m}}(\bb{x}),\ldots,Y_{n,\bb{m}}(\bb{x})$ are iid, and
\[
0 \leq Y_{i,\bb{m}}(\bb{x}) \leq 1 \qquad\text{a.s.},\qquad 1 \leq i \leq n.
\]
Write
\[
\EE\big[Y_{1,\bb{m}}(\bb{x})\big] = \EE\big[F_{\bb{m},n}(\bb{x})\big],
\]
and
\[
\Var\big(F_{\bb{m},n}(\bb{x})\big) = \Var\Big(\frac1n\sum_{i=1}^n Y_{i,\bb{m}}(\bb{x})\Big) = \frac{\Var\left(Y_{1, \bb{m}}(\bb{x})\right)}{n}.
\]
From the interior bias--variance expansion in Theorem~\ref{thm:szasz-bias-var}, we obtain
\[
\Var\left(Y_{1, \bb{m}}(\bb{x})\right) = n \, \Var\big(F_{\bb{m},n}(\bb{x})\big) = F(\bb{x})\{1-F(\bb{x})\} + O\big(m_{\min}^{-1/2}\big).
\]
Hence
\begin{equation*}
\Var\left(Y_{1, \bb{m}}(\bb{x})\right) \to F(\bb{x})\{1-F(\bb{x})\} > 0 \qquad\text{as } n\to\infty.
\end{equation*}

\paragraph{(a) CLT around the mean.} Define the standardized sum
\[
Z_{n,\bb{m}}(\bb{x}) := \frac{1}{\sqrt{n \, \Var\left(Y_{1, \bb{m}}(\bb{x})\right)}} \sum_{i=1}^n \big\{Y_{i,\bb{m}}(\bb{x})- \EE\big[Y_{1,\bb{m}}(\bb{x})\big]\big\}.
\]
Then
\[
Z_{n,\bb{m}}(\bb{x}) = \frac{\sqrt{n}}{ \sqrt{\Var\left(Y_{1, \bb{m}}(\bb{x})\right)}} \Big\{F_{\bb{m},n}(\bb{x})- \EE\big[Y_{1,\bb{m}}(\bb{x})\big]\Big\}.
\]
We use the Lindeberg--Feller CLT for the triangular array $\{Y_{i,\bb{m}}(\bb{x}):1 \leq i \leq n, n\in \mathbb{N}\}$. For any $\varepsilon > 0$,
\[
\begin{aligned}
&\frac{1}{n \, \Var\left(Y_{1, \bb{m}}(\bb{x})\right)} \sum_{i=1}^n \EE\Big[ \big(Y_{i,\bb{m}}(\bb{x})-\EE\big[Y_{1,\bb{m}}(\bb{x})\big]\big)^2 \II\!\big\{ \big|Y_{i,\bb{m}}(\bb{x})-\EE\big[Y_{1,\bb{m}}(\bb{x})\big]\big| > \varepsilon\sqrt{n \, \Var\left(Y_{1, \bb{m}}(\bb{x})\right)} \big\} \Big] \\
&\leq \frac{1}{n \, \Var\left(Y_{1, \bb{m}}(\bb{x})\right)} \sum_{i=1}^n \EE\Big[ \big(Y_{i,\bb{m}}(\bb{x})-\EE\big[Y_{1,\bb{m}}(\bb{x})\big]\big)^2 \II\!\big\{ 1 > \varepsilon\sqrt{n \, \Var\left(Y_{1, \bb{m}}(\bb{x})\right)} \big\} \Big].
\end{aligned}
\]
Since $|Y_{i,\bb{m}}(\bb{x})-\EE[Y_{1,\bb{m}}(\bb{x})]| \leq 1$ a.s.\ and $\Var\left(Y_{1, \bb{m}}(\bb{x})\right)\to F(\bb{x})\{1-F(\bb{x})\} > 0$, we have
\[
\varepsilon\sqrt{n \, \Var\left(Y_{1, \bb{m}}(\bb{x})\right)}\to\infty, \qquad \text{as } n\to\infty.
\]
Thus, for $n$ large enough, the indicator is identically zero, so the Lindeberg--Feller condition is satisfied and
\[
Z_{n,\bb{m}}(\bb{x})\xrightarrow{d}\mathcal N(0,1).
\]
We obtain
\[
\sqrt{n} \, \Big\{F_{\bb{m},n}(\bb{x})-\EE\big[Y_{1,\bb{m}}(\bb{x})\big]\Big\} = \sqrt{\Var\left(Y_{1, \bb{m}}(\bb{x})\right)} \, Z_{n,\bb{m}}(\bb{x}) \xrightarrow{d} \mathcal N\Big(0,F(\bb{x})\{1-F(\bb{x})\}\Big),
\]
which proves Equation~\eqref{eq:sm-clt-interior-mean}.

\paragraph{(b) CLT around $F(\bb{x})$.} We write
\[
\sqrt{n} \, \big\{F_{\bb{m},n}(\bb{x})-F(\bb{x})\big\} = \sqrt{n} \, \big\{F_{\bb{m},n}(\bb{x})-\EE\big[Y_{1,\bb{m}}(\bb{x})\big]\big\} + \sqrt{n} \, \big\{\EE\big[Y_{1,\bb{m}}(\bb{x})\big]-F(\bb{x})\big\}.
\]
The first term converges in distribution to $\mathcal N(0,F(\bb{x})\{1-F(\bb{x})\})$ by part (a). For the second term, use the bias bound in Theorem~\ref{thm:szasz-bias-var}, i.e.,
\[
\big|\EE\big[Y_{1,\bb{m}}(\bb{x})\big]-F(\bb{x})\big| = \big|\Bias\big(F_{\bb{m},n}(\bb{x})\big)\big| = O\big(m_{\min}^{-1}\big).
\]
Therefore, under the bandwidth condition ${\sqrt{n}}m^{-1}_{\min}\to 0$,
\[
\sqrt{n} \, \big|\EE\big[Y_{1,\bb{m}}(\bb{x})\big]-F(\bb{x})\big| = O\left(\sqrt{n}m^{-1}_{\min}\right) \to 0.
\]
Hence the second term converges to $0$ in probability. By Slutsky's theorem,
\[
\sqrt{n} \, \big\{F_{\bb{m},n}(\bb{x})-F(\bb{x})\big\} \xrightarrow{d} \mathcal N\Big(0,F(\bb{x})\{1-F(\bb{x})\}\Big),
\]
which completes the proof. \qed

\subsection{Proof of \texorpdfstring{\hyperref[thm:sm-cdf-uniform]{Theorem~\ref{thm:sm-cdf-uniform}}}{Theorem~\ref{thm:sm-cdf-uniform}}}

Fix $L > 0$ and $\bb{x}\in[0,L]^d$. Recall that
\[
F_{\bb{m},n}(\bb{x}) = \sum_{\bb{k}\in\NN_0^d}F_n\Big(\frac{\bb{k}}{\bb{m}}\Big) \, P_{\bb{k},\bb{m}}(\bb{x}), \qquad F_{\bb{m}}(\bb{x}) = \sum_{\bb{k}\in\NN_0^d}F\Big(\frac{\bb{k}}{\bb{m}}\Big) \, P_{\bb{k},\bb{m}}(\bb{x}),
\]
where $P_{\bb{k},\bb{m}}(\bb{x}) \geq 0$ and $\sum_{\bb{k}\in\NN_0^d}P_{\bb{k},\bb{m}}(\bb{x}) = 1$. Hence,
\[
F_{\bb{m},n}(\bb{x})-F_{\bb{m}}(\bb{x}) = \sum_{\bb{k}\in\NN_0^d}\Big\{F_n\Big(\frac{\bb{k}}{\bb{m}}\Big)-F\Big(\frac{\bb{k}}{\bb{m}}\Big)\Big\} \, P_{\bb{k},\bb{m}}(\bb{x}),
\]
and by convexity of the weighted average,
\begin{equation}\label{eq:sm-stoch-gc}
\big|F_{\bb{m},n}(\bb{x})-F_{\bb{m}}(\bb{x})\big| \leq \sup_{\bb{t}\in[0,\infty)^d}\big|F_n(\bb{t})-F(\bb{t})\big|.
\end{equation}
Taking the supremum over $\bb{x}\in[0,L]^d$ gives
\[
\sup_{\bb{x}\in[0,L]^d}\big|F_{\bb{m},n}(\bb{x})-F_{\bb{m}}(\bb{x})\big| \leq \sup_{\bb{t}\in[0,\infty)^d}\big|F_n(\bb{t})-F(\bb{t})\big| \xrightarrow{a.s.}0, \qquad n\to\infty,
\]
by the multivariate Glivenko--Cantelli theorem \citep[see, e.g.,][Chapter~19]{Van_der_vaart1996}.

Further, write
\[
F_{\bb{m}}(\bb{x}) = \EE\Big[F\Big(\frac{\bb{K}}{\bb{m}}\Big)\Big],
\]
where $K_1,\ldots,K_d$ are independent with $K_j\sim\mathrm{Poi}(m_jx_j)$, and set $\bb{\Delta} := \bb{K}/\bb{m}-\bb{x}$. Fix $\delta\in(0,1]$. Since $F$ is continuous on every compact, it is uniformly continuous on $[0,L + 1]^d$; let $\omega_{L + 1}(\delta)$ denote its modulus of continuity on $[0,L + 1]^d$. For $\bb{x}\in[0,L]^d$, the event $\{\|\bb{\Delta}\| \leq \delta\}$ implies $\bb{K}/\bb{m}\in[0,L + 1]^d$ (because $\bb{K}/\bb{m} \geq 0$ componentwise), hence
\[
\big|F_{\bb{m}}(\bb{x})-F(\bb{x})\big| = \Big|\EE\Big[F\Big(\frac{\bb{K}}{\bb{m}}\Big)-F(\bb{x})\Big]\Big| \leq \omega_{L + 1}(\delta) + \PP(\|\bb{\Delta}\| > \delta).
\]
Moreover,
\[
\EE\|\bb{\Delta}\|^2 = \sum_{j=1}^d \Var\Big(\frac{K_j}{m_j}\Big) = \sum_{j=1}^d \frac{x_j}{m_j} \leq \frac{dL}{m_{\min}},
\]
so Markov's inequality yields $\PP(\|\bb{\Delta}\| > \delta) \leq (dL)/(\delta^2 m_{\min})$, uniformly over $\bb{x}\in[0,L]^d$. Therefore,
\[
\sup_{\bb{x}\in[0,L]^d}\big|F_{\bb{m}}(\bb{x})-F(\bb{x})\big| \leq \omega_{L + 1}(\delta) + \frac{dL}{\delta^2 m_{\min}} \to 0, \qquad m_{\min}\to\infty,
\]
by first letting $\delta\downarrow 0$ (so $\omega_{L + 1}(\delta)\downarrow 0$) and then letting $m_{\min}\to\infty$.

Combining the stochastic bound \eqref{eq:sm-stoch-gc} with the bias bound above gives
\[
\sup_{\bb{x}\in[0,L]^d}\big|F_{\bb{m},n}(\bb{x})-F(\bb{x})\big| \leq \sup_{\bb{t}\in[0,\infty)^d}\big|F_n(\bb{t})-F(\bb{t})\big| + \sup_{\bb{x}\in[0,L]^d}\big|F_{\bb{m}}(\bb{x})-F(\bb{x})\big| \xrightarrow{a.s.}0,
\]
as $n\to\infty$ and $m_{\min}\to\infty$. This proves the theorem. \qed

\subsection{Proof of \texorpdfstring{\hyperref[thm:boundary-bias]{Theorem~\ref{thm:boundary-bias}}}{Theorem~\ref{thm:boundary-bias}}}

Throughout, $(\lambda_1,\dots,\lambda_d)\in[0,L]^d$ is fixed and
\[
\bb{x} = \Big(\frac{\lambda_1}{m_1},\dots,\frac{\lambda_d}{m_d}\Big), \qquad m_{\min} = \min_{1 \leq j \leq d} m_j \to \infty.
\]

We start by proving \eqref{thm:bias.boundary}. As in Theorem~\ref{thm:szasz-bias-var}, let $K_1,\dots,K_d$ be independent with $K_j \sim \mathrm{Poi}(m_j x_j) = \mathrm{Poi}(\lambda_j)$ and define $Z_j := K_j/m_j$. Then
\[
\EE\big[F_{\bb{m},n}(\bb{x})\big] = \EE\Big[F\Big(\frac{\bb{K}}{\bb{m}}\Big)\Big] = F_{\bb{m}}(\bb{x}), \qquad \Bias\big(F_{\bb{m},n}(\bb{x})\big) = F_{\bb{m}}(\bb{x}) - F(\bb{x}),
\]
so it suffices to expand $F_{\bb{m}}(\bb{x}) - F(\bb{x})$.

For each $j\in\{1,\dots,d\}$, define the univariate slice
\[
\varphi_j(t) := F(x_1,\dots,x_{j-1},t,x_{j + 1},\dots,x_d), \qquad t \geq 0.
\]
By Assumption~\ref{ass:szasz-C2-boundary}, there exists $\delta > 0$ such that $F$ is twice continuously differentiable on $\{x: \min_k x_k \leq \delta\}$ and the mixed second derivatives are bounded there. In particular, for each $j$, $\varphi_j$ is twice continuously differentiable on $[0,\delta]$ with
\[
\sup_{t\in[0,\delta]} |\varphi_j''(t)| < \infty,
\]
uniformly in $(\lambda_1,\dots,\lambda_d)\in[0,L]^d$. A second-order Taylor expansion at the boundary point $t = 0$ yields
\[
\varphi_j(t) = \varphi_j(0) + t \, \varphi_j'(0) + \frac{1}{2} t^2 \varphi_j''(0) + R_j(t), \qquad t\in[0,\delta],
\]
where the remainder can be chosen in the standard form
\begin{equation}\label{eq:Rj-modulus}
|R_j(t)| \leq \omega_j(|t|) \, t^2,\qquad t\in[0,\delta],
\end{equation}
for some function $\omega_j:[0,\infty)\to[0,\infty)$ with $\omega_j(r)\to0$ as $r\downarrow0$, independent of $(\lambda_1,\dots,\lambda_d)$.

Evaluating at $t = Z_j$ and using
\[
\EE[Z_j] = x_j = \frac{\lambda_j}{m_j}, \qquad \EE[Z_j^2] = x_j^2 + \frac{\lambda_j}{m_j^2},
\]
we obtain
\begin{equation}\label{eq:phiZ-1}
\EE[\varphi_j(Z_j)] = \varphi_j(0) + x_j\varphi_j'(0) + \frac{1}{2}\Big(x_j^2 + \frac{\lambda_j}{m_j^2}\Big)\varphi_j''(0) + \EE[R_j(Z_j)].
\end{equation}

Since $x_j = \lambda_j/m_j \to 0$ uniformly in $\lambda_j\in[0,L]$, we may also expand $\varphi_j$ at $t = x_j$:
\[
\varphi_j(x_j) = \varphi_j(0) + x_j\varphi_j'(0) + \frac{1}{2} x_j^2 \varphi_j''(0) + r_j,
\]
where $r_j = o(x_j^2) = o(m_j^{-2})$ uniformly in $(\lambda_1,\dots,\lambda_d)$. Subtracting this from Equation~\eqref{eq:phiZ-1} gives
\begin{equation}\label{eq:slice-diff}
\EE[\varphi_j(Z_j)] - \varphi_j(x_j) = \frac{1}{2} \, \frac{\lambda_j}{m_j^2} \, \varphi_j''(0) + \big(\EE[R_j(Z_j)] - r_j\big).
\end{equation}
To identify the leading term we need the behaviour of $\EE[R_j(Z_j)]$.

Fix $\varepsilon > 0$ and choose $\rho\in(0,\delta]$ such that $\omega_j(r) \leq \varepsilon$ whenever $0 \leq r \leq \rho$ in Equation~\eqref{eq:Rj-modulus}. Write
\[
\EE[R_j(Z_j)] = \EE\big[R_j(Z_j)\II_A\big] + \EE\big[R_j(Z_j)\II_{A^c}\big], \qquad A := \{Z_j \leq \rho\}.
\]
On $A$ we have $|R_j(Z_j)| \leq \varepsilon Z_j^2$ by Equation~\eqref{eq:Rj-modulus}, hence
\[
\big|\EE[R_j(Z_j)\II_A]\big| \leq \varepsilon \, \EE[Z_j^2] = \varepsilon \, \Big(x_j^2 + \frac{\lambda_j}{m_j^2}\Big) = O(\varepsilon m_j^{-2}),
\]
uniformly in $(\lambda_1,\dots,\lambda_d)\in[0,L]^d$.

On $A^c = \{Z_j > \rho\}$ we cannot guarantee $Z_j \leq \delta$, so we use a global bound for the Taylor remainder. Since $0 \leq F \leq 1$ and Assumption~\ref{ass:szasz-C2-boundary} implies that $\varphi_j'(0)$ and $\varphi_j''(0)$ are uniformly bounded in $\lambda$, there exists a constant $C_j < \infty$ such that
\[
|R_j(t)| = \bigl|\varphi_j(t) - \varphi_j(0) - t\varphi_j'(0) - \frac{1}{2} t^2\varphi_j''(0)\bigr| \leq C_j(1 + t^2), \qquad t \geq 0,
\]
uniformly in $\lambda$. Hence, on $A^c$,
\[
|R_j(Z_j)| \leq C_j\bigl(1 + Z_j^2\bigr) = C_j\Bigl(1 + \frac{K_j^2}{m_j^2}\Bigr),
\]
and therefore
\begin{align*}
\bigl|\EE\bigl[R_j(Z_j)\II_{A^c}\bigr]\bigr| &\leq C_j \, \PP(K_j > \rho m_j) + \frac{C_j}{m_j^2}\EE\bigl[K_j^2 \II_{\{K_j > \rho m_j\}}\bigr].
\end{align*}
For any fixed $\rho > 0$ and $L < \infty$, a Chernoff bound for Poisson tails yields constants $C,c > 0$ (depending only on $\rho$ and $L$) such that
\[
\sup_{\lambda_j\in[0,L]} \PP(K_j > \rho m_j) \leq C e^{-c m_j} = o(m_j^{-2}) \qquad (m_j\to\infty).
\]
Moreover, $\sup_{\lambda_j\in[0,L]} \EE[K_j^4] < \infty$, so the Cauchy--Schwarz inequality yields
\[
\frac{1}{m_j^2} \sup_{\lambda_j\in[0,L]} \EE\bigl[K_j^2 \, \II_{\{K_j > \rho m_j\}}\bigr] \leq \frac{1}{m_j^2} \sqrt{\sup_{\lambda_j\in[0,L]} \EE[K_j^4]} \times \sqrt{\sup_{\lambda_j\in[0,L]} \PP(K_j > \rho m_j)} = o(m_j^{-2}).
\]
We conclude that
\[
\EE\bigl[R_j(Z_j) \, \II_{A^c}\bigr] = o(m_j^{-2}),
\]
uniformly in $\lambda_j\in[0,L]$. Combining the two parts,
\[
\EE[R_j(Z_j)] = O(\varepsilon m_j^{-2}) + o(m_j^{-2}),
\]
uniformly in $(\lambda_1,\dots,\lambda_d)$; since $\varepsilon > 0$ is arbitrary,
\[
\EE[R_j(Z_j)] = o(m_j^{-2}) \qquad\text{uniformly in $(\lambda_1,\dots,\lambda_d)\in[0,L]^d$}.
\]
Recalling $r_j = o(m_j^{-2})$, we deduce from Equation~\eqref{eq:slice-diff} that
\[
\EE[\varphi_j(Z_j)] - \varphi_j(x_j) = \frac{1}{2} \, \frac{\lambda_j}{m_j^2} \, \varphi_j''(0) + o(m_j^{-2}).
\]
Since
\[
\varphi_j''(0) = \partial^2_{x_j x_j} F\big(x^{(j,0)}\big),
\]
we arrive at the slice expansion
\begin{equation}\label{eq:slice-final}
\EE[\varphi_j(Z_j)] - \varphi_j(x_j) = \frac{1}{2} \, \frac{\lambda_j}{m_j^2} \, \partial^2_{x_j x_j} F\big(x^{(j,0)}\big) + o(m_j^{-2}), \qquad j = 1,\dots,d,
\end{equation}
with the $o(m_j^{-2})$ term uniform in $(\lambda_1,\dots,\lambda_d)\in[0,L]^d$.

Write $Z = (Z_1,\dots,Z_d)$ and define intermediate vectors
\[
Z^{(0)} := x,\qquad Z^{(j)} := (Z_1,\dots,Z_j,x_{j + 1},\dots,x_d),\qquad j = 1,\dots,d.
\]
Then $Z^{(d)} = Z$ and
\[
F(Z) - F(x) = \sum_{j=1}^d \big(F(Z^{(j)}) - F(Z^{(j-1)})\big).
\]
Taking expectations,
\begin{equation}\label{eq:telescoping}
F_m(x) - F(x) = \EE[F(Z)] - F(x) = \sum_{j=1}^d \EE\big[F(Z^{(j)}) - F(Z^{(j-1)})\big].
\end{equation}
For each $j$, add and subtract the slice $\varphi_j$ at $Z_j$ and $x_j$:
\[
F(Z^{(j)}) - F(Z^{(j-1)}) = \big(F(Z^{(j)}) - \varphi_j(Z_j)\big) + \big(\varphi_j(Z_j) - \varphi_j(x_j)\big) + \big(\varphi_j(x_j) - F(Z^{(j-1)})\big).
\]
Set
\[
A_{j,1} := \EE\big[F(Z^{(j)}) - \varphi_j(Z_j)\big],\qquad A_{j,2} := \EE[\varphi_j(Z_j)] - \varphi_j(x_j),\qquad A_{j,3} := \varphi_j(x_j) - \EE[F(Z^{(j-1)})].
\]
Then
\[
\EE\big[F(Z^{(j)}) - F(Z^{(j-1)})\big] = A_{j,1} + A_{j,2} + A_{j,3}.
\]

By definition,
\[
\varphi_j(Z_j) = F(x_1,\dots,x_{j-1},Z_j,x_{j + 1},\dots,x_d),
\]
whereas
\[
F(Z^{(j)}) = F(Z_1,\dots,Z_{j-1},Z_j,x_{j + 1},\dots,x_d).
\]
Thus $A_{j,1}$ measures the effect of replacing $(x_1,\dots,x_{j-1})$ by $(Z_1,\dots,Z_{j-1})$ in the first $j-1$ coordinates (with the $j$th coordinate fixed at $Z_j$). Similarly, $A_{j,3}$ measures the effect of replacing $(Z_1,\dots,Z_{j-1})$ by $(x_1,\dots,x_{j-1})$ when the $j$th coordinate is fixed at $x_j$.

Consider $A_{j,1}$. For each realization of $(Z_1,\dots,Z_{j-1},Z_j)$ with all components in $[0,\delta]$, let
\[
g(y_1,\ldots,y_{j-1}) := F(y_1,\ldots,y_{j-1},Z_j,x_{j + 1},\ldots,x_d),
\]
and set $\bb{h} := (Z_1-x_1,\ldots,Z_{j-1}-x_{j-1})^{\top}$. By the integral form of Taylor's theorem,
\begin{align*}
F(Z^{(j)})-\varphi_j(Z_j)
&= g(x_{1:(j-1)} + \bb{h})-g(x_{1:(j-1)}) \\
&= \sum_{k=1}^{j-1}\partial_{x_k}g(x_{1:(j-1)}) \, h_k + \int_0^1 (1-t) \, \bb{h}^{\top} \nabla^2 g(x_{1:(j-1)} + t \bb{h}) \, \bb{h}\mathrm{d} t.
\end{align*}
Note that $\partial_{x_k}g(x_{1:(j-1)}) = \partial_{x_k}F(x_1,\ldots,x_{j-1},Z_j,x_{j + 1},\ldots,x_d)$ depends only on $Z_j$, whereas $h_k = Z_k-x_k$ depends only on $Z_k$. Since the components $(Z_1,\ldots,Z_{j-1})$ are independent of $Z_j$ and $\EE[Z_k-x_k] = 0$, we obtain
\[
\EE\left[\partial_{x_k}g(x_{1:(j-1)})(Z_k-x_k)\right] = \EE\left[\partial_{x_k}g(x_{1:(j-1)})\right]\EE[Z_k-x_k] = 0, \qquad k = 1,\ldots,j-1.
\]
Moreover, by Assumption~\ref{ass:szasz-C2-boundary}, the second-order partial derivatives of $F$ are uniformly bounded on $\{x:\min_{1 \leq j \leq d}x_j \leq \delta\}$; hence there exists $C < \infty$ such that $\|\nabla^2 g(\cdot)\|_{\infty} \leq C$ throughout the segment $\{x_{1:(j-1)} + t \bb{h}: t\in[0,1]\}$. Consequently,
\[
\left| \int_0^1 (1-t) \, \bb{h}^{\top} \nabla^2 g(x_{1:(j-1)} + t \bb{h}) \, \bb{h} \mathrm{d} t \right| \leq \frac{C}{2} \, \|\bb{h}\|_1^2 \leq \frac{C (j-1)}{2} \sum_{k=1}^{j-1}|Z_k-x_k|^2,
\]
and
\[
\EE[|Z_k-x_k|^2] = \frac{\lambda_k}{m_k^2},
\]
so we obtain
\[
|A_{j,1}| \leq \frac{C (j-1)}{2} \sum_{k=1}^{j-1} \EE\big[|Z_k-x_k|^2\big] \leq C' (j-1) \sum_{k=1}^{j-1} \frac{\lambda_k}{m_k^2} \leq C'' \, m_{\min}^{-2},
\]
uniformly in $(\lambda_1,\dots,\lambda_d)\in[0,L]^d$. An analogous argument, with the $j$th coordinate fixed at $x_j$ instead of $Z_j$, gives the same bound for $A_{j,3}$. In both cases one can refine $O(m_{\min}^{-2})$ to $o(m_{\min}^{-2})$ using continuity of the derivatives, but for the global expansion it suffices that
\[
A_{j,1} = o(m_{\min}^{-2}),\qquad A_{j,3} = o(m_{\min}^{-2}),
\]
uniformly in $(\lambda_1,\dots,\lambda_d)$.

From Equation~\eqref{eq:slice-final},
\[
A_{j,2} = \EE[\varphi_j(Z_j)] - \varphi_j(x_j) = \frac{1}{2} \, \frac{\lambda_j}{m_j^2} \, \partial^2_{x_j x_j} F\big(x^{(j,0)}\big) + o(m_j^{-2}).
\]
Combining with the bounds for $A_{j,1}$ and $A_{j,3}$, we obtain
\[
\EE\big[F(Z^{(j)}) - F(Z^{(j-1)})\big] = \frac{1}{2} \, \frac{\lambda_j}{m_j^2} \, \partial^2_{x_j x_j} F\big(x^{(j,0)}\big) + o(m_{\min}^{-2}), \qquad j = 1,\dots,d,
\]
uniformly in $(\lambda_1,\dots,\lambda_d)\in[0,L]^d$. Substituting into Equation~\eqref{eq:telescoping} and summing over $j$ yields
\[
F_m(x) - F(x) = \frac{1}{2} \sum_{j=1}^d \frac{\lambda_j}{m_j^2} \, \partial^2_{x_j x_j} F\big(x^{(j,0)}\big) + o(m_{\min}^{-2}),
\]
uniformly in $(\lambda_1,\dots,\lambda_d)\in[0,L]^d$. Since $\Bias(F_{m,n}(x)) = F_m(x)-F(x)$, this is precisely the claimed boundary-layer bias expansion in \eqref{thm:bias.boundary}.

Next, we prove \eqref{thm:var.boundary}. Write
\[
F_{\bb{m},n}(\bb{x})-F_{\bb{m}}(\bb{x}) = \frac{1}{n}\sum_{i=1}^n Z_{i,\bb{m}}(\bb{x}),
\]
where
\[
Z_{i,\bb{m}}(\bb{x}) := \sum_{k\in\mathbb{N}_0^d} \Bigl\{\II_{\{\bb{X}_i \leq \bb{k}/\bb{m}\}}-F(\bb{k}/\bb{m})\Bigr\}P_{\bb{k},\bb{m}}(\bb{x}), \qquad i = 1,\dots,n,
\]
so that the $Z_{i,\bb{m}}(\bb{x})$ are iid with $\EE[Z_{i,\bb{m}}(\bb{x})] = 0$ and
\[
\Var\big(F_{\bb{m},n}(\bb{x})\big) = \frac{1}{n} \, \EE\bigl[Z_{1,\bb{m}}(\bb{x})^2\bigr].
\]
As in Equation~\eqref{eq:VarZ} in the proof of Theorem~\ref{thm:szasz-bias-var}, let $\bb{K} = (K_1,\dots,K_d)$ and $\bb{L} = (L_1,\dots,L_d)$ be independent with
\[
K_j\sim\mathrm{Poi}(m_j x_j) = \mathrm{Poi}(\lambda_j),\qquad L_j\sim\mathrm{Poi}(m_j x_j) = \mathrm{Poi}(\lambda_j),\qquad j = 1,\dots,d,
\]
and define
\[
\bb{U} := \frac{\bb{K}\wedge \bb{L}}{\bb{m}}-\bb{x},\qquad \bb{V} := \frac{\bb{K}}{\bb{m}}-\bb{x}.
\]
Then one can write
\[
\EE\bigl[Z_{1,\bb{m}}(\bb{x})^2\bigr] = F(\bb{x})\bigl\{1-F(\bb{x})\bigr\} + \sum_{j=1}^d \partial_{x_j}F(\bb{x}) \, \EE[U_j] + R_{\bb{m}}(\bb{x}),
\]
where $R_{\bb{m}}(\bb{x})$ collects the quadratic and Taylor remainder terms. The derivation of this representation is based on a near/far Taylor expansion of $F(\bb{x} + \bb{U})$ and $F(\bb{x} + \bb{V})$ and only requires that $F$ be twice continuously differentiable with uniformly bounded second derivatives on a neighbourhood of $\bb{x}$, together with Poisson tail bounds; see the detailed argument in the proof of Theorem~\ref{thm:szasz-bias-var}. The only change here is that $\bb{x} = \bb{x}( \bb{m})$ depends on $\bb{m}$, but for $x_j = \lambda_j/m_j$ and $(\lambda_1,\dots,\lambda_d)\in[0,L]^d$ we have $\min_j x_j \leq \delta$ for all sufficiently large $m_{\min}$, so $\bb{x}$ remains in the region covered by Assumption~\ref{ass:szasz-C2-boundary}. Under Assumption~\ref{ass:szasz-C2-boundary}, this yields
\[
R_{\bb{m}}(\bb{x}) = O(m_{\min}^{-1}),
\]
uniformly in $(\lambda_1,\dots,\lambda_d)\in[0,L]^d$.

It remains to control $\EE[U_j]$ under the boundary-layer scaling. By the identity $\min(a,b) = \frac{1}{2}(a + b-|a-b|)$ we have
\[
\EE[U_j] = \frac{1}{m_j}\Bigl\{\EE\bigl[\min(K_j,L_j)\bigr]-m_jx_j\Bigr\} = \frac{1}{m_j}\Bigl\{\EE\bigl[\min(K_j,L_j)\bigr]-\lambda_j\Bigr\}.
\]
Here $K_j,L_j\sim\mathrm{Poi}(\lambda_j)$ with $\lambda_j\in[0,L]$ fixed, so $\EE[\min(K_j,L_j)]$ is a bounded function of $\lambda_j$. In particular,
\[
0 \leq \EE\bigl[\min(K_j,L_j)\bigr] \leq \EE[K_j] = \lambda_j
\]
implies
\[
-\lambda_j \leq \EE\bigl[\min(K_j,L_j)\bigr]-\lambda_j \leq 0,
\]
and therefore
\[
|\EE[U_j]| \leq \frac{\lambda_j}{m_j} \leq \frac{L}{m_j}.
\]
Consequently,
\[
\Bigl|\sum_{j=1}^d \partial_{x_j}F(x) \, \EE[U_j]\Bigr| \leq C \, m_{\min}^{-1}
\]
for some constant $C < \infty$ depending only on $L$, $d$ and the derivative bounds from Assumption~\ref{ass:szasz-C2-boundary}.

Putting the pieces together, we obtain
\[
\EE\bigl[Z_{1,\bb{m}}(\bb{x})^2\bigr] = F(\bb{x})\bigl\{1-F(\bb{x})\bigr\} + O(m_{\min}^{-1}),
\]
uniformly over $(\lambda_1,\dots,\lambda_d)\in[0,L]^d$, as $m_{\min}\to\infty$. Therefore
\[
\Var\big(F_{\bb{m},n}(\bb{x})\big) = \frac{1}{n} \, \EE\bigl[Z_{1,\bb{m}}(\bb{x})^2\bigr] = \frac{1}{n} \, F(\bb{x})\bigl\{1-F(\bb{x})\bigr\} + O\bigl(n^{-1}m_{\min}^{-1}\bigr),
\]
which is the claimed variance expansion in \eqref{thm:bias.boundary}. This concludes the proof.

\end{appendices}

\section*{Reproducibility}

The $\mathsf{R}$~\citep{Rsoftware} code used to generate the simulation study results in Section~\ref{sec:simulations} is publicly available on~\href{https://github.com/FredericOuimetMcGill/MultivariateSzaszMirakyan}{GitHub}.

\section*{Funding}
\addcontentsline{toc}{section}{Funding}

Funding for this work was provided in part by the Natural Sciences and Engineering Research Council of Canada (NSERC) through Discovery Grant RGPIN-2026-04471 and Discovery Launch Supplement DGECR-2026-00449 awarded to Fr\'ed\'eric Ouimet. Additional support for this work was provided by Research Nova Scotia through a New Health Investigator Grant and by NSERC through a Discovery Grant (RGPIN-2019-07212), both awarded to Cindy Feng. Guanjie Lyu and Cindy Feng also acknowledge support from the Mitacs Accelerate program.

\addcontentsline{toc}{section}{References}

\bibliographystyle{plainnat}
\bibliography{bib_clean}

\end{document}